\newcommand{\N}{{\mathbb N}}
\newcommand{\C}{{\mathbb C}}
\newcommand{\Z}{{\mathbb Z}}
\newcommand{\wand}{wandering domain}
\newcommand{\tef}{transcendental entire function}
\newcommand{\nhd}{neighbourhood}
\newcommand{\sconn}{simply connected}
\newcommand{\mconn}{multiply connected}
\newcommand{\sw}{spider's web}
\newcommand{\spl}{strongly polynomial-like}
\newcommand\qfor{\quad\text{for }}
\newcommand{\kfc}{I^{\!+\!}(f)}
\newcommand{\kfnc}{I^{\!+\!}(f^n)}
\theoremstyle{plain}
\newtheorem{theorem}{Theorem}[section]
\newtheorem{corollary}[theorem]{Corollary}
\newtheorem*{theorem*}{Theorem}
\newtheorem*{proposition*}{Proposition}
\newtheorem{lemma}[theorem]{Lemma}
\theoremstyle{definition}
\theoremstyle{remark}
\newtheorem*{remark*}{Remark}
\newtheorem*{remarks*}{Remarks}
\theoremstyle{problem}
\theoremstyle{example}
\newtheorem{example}[theorem]{Example}
\newtheorem*{example*}{Example}
\theoremstyle{question}
\theoremstyle{questions}
\newtheorem*{questions*}{Questions}
\begin{document}


\title[Connectedness properties of $ \kfc $]{Connectedness properties of the set where the iterates of an entire function are unbounded}

\author{J.W. Osborne, P.J. Rippon \and G.M. Stallard}
\address{Department of Mathematics and Statistics \\
   The Open University \\
   Walton Hall\\
   Milton Keynes MK7 6AA\\
   UK}
\email{john.osborne@open.ac.uk, phil.rippon@open.ac.uk, gwyneth.stallard@open.ac.uk}



\thanks{2010 {\it Mathematics Subject Classification.}\; Primary 37F10, Secondary 30D05.\\The last two authors are supported by the EPSRC grant EP/K031163/1.}


\begin{abstract}
We investigate the connectedness properties of the set $ \kfc $ of points where the iterates of an entire function $ f $ are unbounded.  In particular, we show that $ \kfc $ is connected  whenever  iterates of the minimum modulus of $ f $ tend to $ \infty $. For a general \tef\  $ f $,  we show that $ \kfc \cup \lbrace \infty \rbrace$ is always connected and that, if $ \kfc $ is disconnected, then it has uncountably many components, infinitely many of which are unbounded.
\end{abstract}
\maketitle

\section{Introduction}
\label{intro}
\setcounter{equation}{0}

Denote the $ n $th iterate of an entire function $ f $ by $ f^n $, for $ n \in \N. $  The \textit{Fatou set} $ F(f) $ is the set of points $ z \in \C $ such that the family of functions $ \lbrace f^n : n \in \N \rbrace $ is normal in some \nhd\ of $ z $, and the \textit{Julia set} $ J(f) $ is the complement of $ F(f) $.  We refer to \cite{aB, wB93, Mil}, for example, for an introduction to complex dynamics and the properties of these sets.

For any $ z \in \C $, we call the sequence $ (f^n(z))_{n \in \N} $ the \textit{orbit} of $ z $ under $ f $.  This paper is concerned with the set of points whose orbits are unbounded, which we denote by
\[ \kfc = \lbrace z \in \C : (f^n(z))_{n \in \N} \textrm{ is unbounded} \rbrace. \]
Clearly, $ \kfc $ contains the escaping set,
\[ I(f) = \lbrace z \in \C : f^n(z) \to \infty \text{ as } n \to \infty \rbrace, \]
and is the complement of $ K(f) $, the set of points whose orbits are bounded.  If $ f $ is a polynomial, then $ K(f) $ is the \textit{filled Julia set} of $ f $, and it is well known that $ \kfc = I(f) $.  However, if $ f $ is transcendental, then $ \kfc  \setminus I(f) $ always meets $ J(f) $ and may also meet $ F(f) $; see \cite{OS} and references therein for the properties of $ \kfc  \setminus I(f) $.

For a general \tef, we show in Section \ref{basic} that $\kfc$ has many properties in common with $I(f)$. For example, we show that the properties of $  I(f) $ proved by Eremenko in \cite{E} also hold for $ \kfc $, and we prove the following result, which parallels \cite[Theorem~4.1]{RS11}.

\begin{theorem}
\label{infty}
Let $ f $ be a \tef.  Then $ \kfc \cup \lbrace \infty \rbrace $ is connected.
\end{theorem}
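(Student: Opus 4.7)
The plan is to adapt the strategy used for the analogous statement \cite[Theorem~4.1]{RS11}, which establishes that $I(f) \cup \{\infty\}$ is connected. The key input will be an Eremenko-style result established in Section~\ref{basic}: every component of $\overline{\kfc}$ is unbounded. This immediately implies, by a standard topological argument, that $\overline{\kfc} \cup \{\infty\}$ is connected in $\widehat{\C}$, since any clopen subset of $\overline{\kfc} \cup \{\infty\}$ avoiding $\infty$ would be a bounded closed union of components of $\overline{\kfc}$, contradicting unboundedness of those components.

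To upgrade to connectedness of $\kfc \cup \{\infty\}$ itself, I plan to exhibit, for each $z_0 \in \kfc$, a continuum $\Lambda_{z_0} \subset \kfc \cup \{\infty\}$ joining $z_0$ to $\infty$; the union of these continua is then connected because they all share the point $\infty$. To construct $\Lambda_{z_0}$, choose a subsequence $(n_k)$ with $|f^{n_k}(z_0)| \to \infty$ and let $V_k$ be the component of $\{z \in \C : |f^{n_k}(z)| > k\}$ containing $z_0$. A maximum-modulus argument applied to the entire function $f^{n_k}$ shows that $V_k$ is unbounded (a bounded component would force $|f^{n_k}| \le k$ on its interior, contradicting $|f^{n_k}(z_0)| > k$), so $\overline{V_k} \cup \{\infty\}$ is a compact connected subset of $\widehat{\C}$ through $z_0$ and $\infty$ on which $|f^{n_k}| \geq k$. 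A Hausdorff subsequential limit of the $\overline{V_k} \cup \{\infty\}$ furnishes a natural candidate for $\Lambda_{z_0}$.

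The main obstacle will be verifying that every finite point $w$ of this limiting continuum actually belongs to $\kfc$. The condition $|f^{n_k}| \geq k$ on $\overline{V_k}$ does not automatically transfer to $w$, because points of $\overline{V_k}$ approaching $w$ might do so more slowly than the modulus of continuity of $f^{n_k}$ decays---an issue that is particularly acute near Julia points with bounded orbit, where iterates oscillate wildly. Overcoming this will require a diagonal choice of subsequences, using the complete invariance $f^{-1}(\kfc) = \kfc$ together with the density of $\kfc \cap J(f)$ in $J(f)$ from Section~\ref{basic}, or an alternative construction that keeps the continuum inside $\kfc \cup \{\infty\}$ throughout rather than only at the limit.
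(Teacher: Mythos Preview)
Your strategy diverges from the paper's and, as you yourself suspect, contains a genuine gap that your suggested remedies do not close. The Hausdorff limit $\Lambda_{z_0}$ of the continua $\overline{V_k}\cup\{\infty\}$ need not lie in $\kfc\cup\{\infty\}$: a finite point $w\in\Lambda_{z_0}$ is merely a limit of points $w_k\in\overline{V_k}$ with $|f^{n_k}(w_k)|\ge k$, and this says nothing about $|f^{n_k}(w)|$. Indeed, if $p\in J(f)$ is a repelling periodic point (hence in $K(f)$), any neighbourhood of $p$ meets every $V_k$ once $V_k$ is large enough to reach that neighbourhood, so nothing prevents $p$ from appearing in the limit. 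A diagonal choice of subsequences cannot help, because you do not get to choose $w$: you must show the inclusion for \emph{every} limit point simultaneously. Complete invariance and density of $\kfc\cap J(f)$ in $J(f)$ are likewise of no use here, since the limit continuum may well pass through the dense set $K(f)\cap J(f)$. Your fallback idea of ``keeping the continuum inside $\kfc\cup\{\infty\}$ throughout'' would require a nested-intersection construction, but the sets $\{|f^{n_k}|>k\}$ are not nested for $\kfc$ (unlike the situation for $A(f)$ or suitable level sets for $I(f)$), precisely because the unboundedness of the orbit is only along a subsequence.

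The paper proceeds quite differently and avoids Hausdorff limits altogether. It proves the equivalent statement that every bounded simply connected domain $G$ meeting $\kfc$ has $\partial G\cap\kfc\ne\emptyset$, and then deduces connectedness of $\kfc\cup\{\infty\}$ by the standard open-cover argument. For the boundary statement, two cases arise: if $G$ meets $J(f)$, one invokes \cite[Lemma~4.1]{RS11} to get $\partial G\cap I(f)\ne\emptyset$; if $G\subset F(f)$, one reduces to $G$ being a bounded wandering domain in $\kfc$ (Baker domains being unbounded) and then uses the result from~\cite{OS} that such a wandering domain has $\partial G\cap\kfc\ne\emptyset$. This separation argument sidesteps entirely the problem of controlling limits of iterates along approximating continua.
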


In the paper \cite{E}, Eremenko remarked that it is plausible that $ I(f) $ has no bounded components.  This conjecture has stimulated much research in transcendental dynamics and remains open, though there have been several partial results \---\ see for example \cite{R07, RS05, R3S}.  One of the strongest partial results for a general \tef\ \cite[Theorem 1]{RS05} was obtained by considering the fast escaping set $ A(f) $, a subset of $ I(f) $ defined in terms of the iterated maximum modulus function. By contrast, in this paper we show that the connectedness properties of the \textit{superset} $ \kfc $ of $ I(f) $ are related to a  completely new condition involving  the iterated  \textit{minimum} modulus function.

We prove the following result,  in which
\begin{equation*}
m(r) = m(r,f) := \min \lbrace \vert f(z) \vert : \vert z \vert = r \rbrace,
\end{equation*}
and $ m^n(r) $ denotes the $ n $th iterate of the function $ r \mapsto m(r) $.

\begin{theorem}
\label{minconn}
Let $ f $ be a \tef\ for which
\begin{equation}\label{minmodprop}
\text{there exists } r > 0 \text{ such that } m^n(r) \to \infty \text{ as } n \to \infty.
\end{equation}
Then $ \kfc $ is connected.
\end{theorem}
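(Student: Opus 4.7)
The plan is to combine Theorem~\ref{infty} with a sequence of Jordan curves lying in $I(f)\subset\kfc$, each surrounding the origin and escaping to $\infty$; once such curves are available, a short topological squeeze forces $\kfc$ to be connected.

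\emph{Step 1: extracting the Jordan curves from (\ref{minmodprop}).}  Let $r_n=m^n(r)$, so that $r_n\to\infty$ and, after discarding finitely many terms, $r_{n+1}=m(r_n)>r_n$.  The aim is to produce, for every large $n$, a Jordan curve $\gamma_n$ in $I(f)$ surrounding $0$, with $\min_{z\in\gamma_n}|z|\to\infty$ as $n\to\infty$, and such that $\bigcup_n\gamma_n$ lies in one connected subset of $\kfc$.  The starting observation is that on $\{|z|=r_n\}$ we have $|f(z)|\ge r_{n+1}$, and that (\ref{minmodprop}) rules out $f$ being zero-free (otherwise $m(r,f)$ would be arbitrarily small); so, for $n$ large, $f$ has zeros in $\{|z|\le r_n\}$, and an argument-principle/degree computation gives $\{|w|\le r_{n+1}\}\subset f(\{|z|\le r_n\})$.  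Iterating this inclusion and extracting components of iterated preimages by a compactness argument produces the curves $\gamma_n$.  This is essentially the \sw\ construction for $A(f)$ developed in the Rippon--Stallard theory, and it is the main technical obstacle of the proof.

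\emph{Step 2: topological squeeze.}  Assume for a contradiction that $\kfc$ is disconnected.  By Theorem~\ref{infty}, $\kfc\cup\{\infty\}$ is connected, so every component of $\kfc$ must be unbounded (a bounded component of $\kfc$ would also be a component of $\kfc\cup\{\infty\}$, contradicting connectedness).  Let $A$ be the component of $\kfc$ containing all the $\gamma_n$, and let $B$ be any other component.  Then $B$ is non-empty, unbounded, connected, and disjoint from every $\gamma_n$, so $B$ lies in a single component of $\C\setminus\gamma_n$; since $B$ is unbounded while $\gamma_n$ is bounded, $B$ lies in the unbounded component, which is contained in $\{|z|>\min_{z\in\gamma_n}|z|\}$ because $\gamma_n$ surrounds $0$.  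Letting $n\to\infty$ forces $B=\emptyset$, a contradiction.  The topological part is cheap once the curves from Step~1 are in hand, so the entire weight of the argument rests on translating the iterated-minimum-modulus hypothesis into a \sw-like family of curves in $\kfc$.
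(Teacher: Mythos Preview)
Your Step~2 contains a real gap. From Theorem~\ref{infty} you cannot conclude that every component of $\kfc$ is unbounded: a bounded component $B$ of $\kfc$ is closed in $\kfc$ but need not be open there, so it need not be a component of $\kfc\cup\{\infty\}$. The paper notes this explicitly after Theorem~\ref{bounddomkfc}, and indeed the analogous implication for $I(f)$ is exactly Eremenko's conjecture. A concrete plane example: set $E=\{0\}\cup\bigcup_{n\ge1}\{1/n+iy:y\ge0\}$; then $\{0\}$ is a bounded component of $E$, yet $E\cup\{\infty\}$ is connected. Thus your squeeze rules out unbounded components $B\ne A$ but says nothing about bounded ones.

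Step~1 is not merely sketchy but is attempting something the paper flags as open. Producing Jordan curves $\gamma_n\subset I(f)$ surrounding~$0$ with $\min_{\gamma_n}|z|\to\infty$ and lying in a single connected subset of $\kfc$ would give a \sw\ inside $I(f)$, and via the $A(f)$ machinery this would force $I(f)$ to be connected; but the introduction states that whether~\eqref{minmodprop} implies $I(f)$ is connected is an interesting open question. Your inclusion $\{|w|\le r_{n+1}\}\subset f(\{|z|\le r_n\})$ is fine, but points on $\{|z|=r_n\}$ need not lie in $I(f)$ or even in $\kfc$, since $m$ need not be monotone, and the compactness argument you allude to does not obviously produce escaping curves without additional hypotheses.

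The paper's argument is quite different and never constructs anything inside $I(f)$. It proves a more general statement (Theorem~\ref{fullresult}) about sequences of bounded \sconn\ domains $D_n$ with $f(\partial D_n)$ surrounding $D_{n+1}$, taking $D_n=\{|z|<m^n(r)\}$ for Theorem~\ref{minconn}. Assuming $\kfc$ is disconnected, Lemma~\ref{rempe} gives a closed connected set in $K(f)$ whose boundary is an unbounded continuum in $K(f)$; an inductive lemma (Lemma~\ref{genctm}) then manufactures continua $\Gamma_{n_j}\subset K(f)\cap(\C\setminus D_{n_j})$ with $f^{n_{j+1}-n_j}(\Gamma_{n_j})\supset\Gamma_{n_{j+1}}$, and Lemma~\ref{RSlemm} extracts a point of $K(f)$ whose orbit visits each $\Gamma_{n_j}$ and is therefore unbounded---the desired contradiction.
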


In fact, we show that $ \kfc $ is connected for a more general class of functions than is covered by Theorem \ref{minconn}. Details are given in Section \ref{connected}.

It is natural to ask which \tef s satisfy the condition~\eqref{minmodprop}. Clearly, there are many that do not \---\ for example, any function bounded on a path to~$\infty$. However, there are also functions that do satisfy the condition.  In forthcoming work, we consider the consequences of condition~\eqref{minmodprop} for other sets related to $I(f)$ and $\kfc$, and show that there are many classes of functions for which  condition~\eqref{minmodprop}  holds.  In particular, we show that this is the case for all entire functions of order less than~1/2, so $ \kfc $ is connected for such functions. It is an interesting question whether condition~\eqref{minmodprop} is also sufficient to ensure that $I(f)$ is connected.

Note that there are \tef s for which $ \kfc $ is disconnected.  For example,  if $ f(z) = \sin z $, then $ f $ maps the real line $ \mathbb{R} $ onto the interval $ [ -1, 1], $ so $ \mathbb{R} $ is a closed, connected set in $ K(f) $ that disconnects $ \kfc $.

In Section \ref{components}, we prove a number of results on the components of $ \kfc $ for a general \tef, including the following.

\begin{theorem}
\label{uncount}
Let $ f $ be a \tef\ such that $ \kfc $ is disconnected.  Then $ \kfc $ has uncountably many components, infinitely many of which are unbounded.
\end{theorem}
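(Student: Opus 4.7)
The starting point is Theorem~\ref{infty}, which says $\kfc \cup \{\infty\}$ is connected in $\C \cup \{\infty\}$. From this I would first extract the following key observation: every non-empty, relatively open and closed subset $A$ of $\kfc$ is unbounded. Indeed, if $A$ were bounded and clopen in $\kfc$, then $A$ and $(\kfc \setminus A) \cup \{\infty\}$ would be disjoint non-empty closed subsets of $\kfc \cup \{\infty\}$ (the former bounded, so $\infty \notin \overline{A}$; the latter closed because $\kfc \setminus A$ is closed in $\kfc$), providing a separation that contradicts Theorem~\ref{infty}. The property is hereditary: if $A$ is clopen in $\kfc$, then every relatively clopen subset of $A$ is also clopen in $\kfc$, so every non-empty relatively clopen subset of $A$ is unbounded. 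An easy consequence is that $A \cup \{\infty\}$ is itself connected for every non-empty clopen $A \subset \kfc$.

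Now suppose $\kfc$ is disconnected and fix a clopen partition $\kfc = A \sqcup B$. Both pieces are unbounded, so $\kfc$ has at least two unbounded components. To amplify this, my plan is to use the complete invariance of $\kfc$ under $f$: the preimages $f^{-n}(A)$ and $f^{-n}(B)$ are clopen in $\kfc$, and intersecting them yields, for each finite word $\omega = (\omega_0, \dots, \omega_{n-1}) \in \{A,B\}^n$, a clopen cell
\[
 A_\omega = \bigcap_{k=0}^{n-1} f^{-k}(A_{\omega_k}),
\]
each non-empty cell being unbounded by the observation. Passing to infinite words $\omega \in \{A,B\}^{\N}$ and setting $X_\omega = \bigcap_{n=0}^{\infty} A_{\omega|n}$, the fact that each $f^{-k}(A_{\omega_k})$ is clopen forces every component of $\kfc$ that meets $X_\omega$ to lie entirely inside $X_\omega$. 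Hence each non-empty $X_\omega$ is a union of components of $\kfc$, and distinct infinite words yield disjoint sets $X_\omega$; the number of components of $\kfc$ is therefore at least the number of $\omega \in \{A,B\}^{\N}$ for which $X_\omega$ is non-empty.

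The hard part is to produce uncountably many non-empty $X_\omega$, and to guarantee that infinitely many of the resulting components are unbounded. Since $\kfc$ is not compact, a nested intersection of non-empty clopen cells may well be empty, so the realisation of uncountably many itineraries is not automatic. I expect the resolution to rely on structural properties of $\kfc$ beyond Theorem~\ref{infty}---chiefly the density of $J(f) \cap \kfc$ in $J(f)$ supplied by Eremenko's theorem together with the topological transitivity of $f|_{J(f)}$---so as to force the symbolic coding induced by the partition $A \sqcup B$ to realise an uncountable subshift of $\{A,B\}^{\N}$, with each finite cell $A_\omega$ hosting an unbounded component. An alternative route would be to work directly in the continuum $\overline{\kfc}$ inside $\C \cup \{\infty\}$, combining Theorem~\ref{infty} with a Sierpinski-type theorem that forbids partitioning a continuum into countably many proper non-empty closed subsets; however, translating such a conclusion back to the $G_\delta$ set $\kfc$ requires additional care with the boundary points in $\overline{\kfc} \setminus \kfc$.
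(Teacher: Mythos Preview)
Your plan has the right symbolic-dynamics instinct, but both halves have genuine gaps that the paper resolves with different machinery.

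For \emph{infinitely many unbounded components}, your observation that every non-empty clopen subset of $\kfc$ is unbounded is correct, but it does not give what you need: an unbounded clopen set can perfectly well be a union of bounded components of $\kfc$ accumulating at $\infty$, so ``each finite cell $A_\omega$ hosting an unbounded component'' is unjustified. The paper avoids this entirely by invoking the fast escaping set $A(f)\subset\kfc$, whose components are all known to be unbounded, together with a general result (Theorem~\ref{kfccomp}, proved via Lemma~\ref{EF} and the blowing-up property) that if $\kfc$ is disconnected then infinitely many of its components meet $A(f)$. Any component meeting $A(f)$ contains an unbounded component of $A(f)$ and is therefore itself unbounded.

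For \emph{uncountably many components}, you correctly diagnose the obstacle: the cells $A_\omega$ are not compact, so a nested intersection $X_\omega$ may be empty and the full shift on $\{A,B\}^{\N}$ need not be realised. The paper sidesteps your clopen partition completely. Instead it uses Lemma~\ref{rempe} to find a closed connected $\Gamma\subset K(f)$ separating two complementary domains $G_0,G_1$, shows (via Lemma~\ref{kfcjf}) that each $G_i$ meets $J(f)$, and then works with \emph{compact} closures $\overline H_0\subset G_0$, $\overline H_1\subset G_1$ of small discs about Julia points, together with further discs $\overline H_n$ with $\inf_{z\in H_n}|z|\to\infty$. The blowing-up property (Lemma~\ref{blowup}) gives the covering relations $f^{m_n}(H_i)\supset\overline H_n$ and $f^{m_n}(H_n)\supset\overline H_0\cup\overline H_1$, and then Lemma~\ref{RSlemm} (a nested-compact-sets lemma) produces, for every infinite $0$--$1$ word, a point whose orbit follows the prescribed itinerary and visits the $H_n$ infinitely often, hence lies in $\kfc$. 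Distinct words yield distinct components because some iterate would have to cross $\Gamma\subset K(f)$. The compactness you were missing is exactly what Lemma~\ref{RSlemm} supplies; your suggestion to use transitivity of $f|_{J(f)}$ is the right direction, but the implementation lives in compact neighbourhoods inside $J(f)$, not in clopen pieces of $\kfc$.
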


The paper is organised as follows.  In Section \ref{basic}, we prove Theorem~\ref{infty} and some basic properties of $\kfc$, and in Section \ref{connected} we give the proof of Theorem~\ref{minconn} and related results.  In Section \ref{components}, we prove Theorem~\ref{uncount} and a number of other results on the components of $ \kfc $. Finally, in Section \ref{examples}, we give some examples related to the hypotheses of Theorem~\ref{minconn} and its generalisation in Section~\ref{connected}.

\section{Basic properties of $\kfc$}
\label{basic}
\setcounter{equation}{0}

In this section we   prove   a number of basic properties of $\kfc$ and discuss the interaction of $\kfc$ with the Fatou set and the Julia set.  We note first that, for a \tef\ $ f $, it follows immediately from the corresponding properties of $ K(f) $ that $ \kfc $ is completely invariant and that $ \kfnc = \kfc $, for $ n \in \N. $

As usual, we refer to components of the Fatou set as \emph{Fatou components}.  If $ U $ is a Fatou component of $ f $ then, for every   $ n \in \N $,   $ f^n(U) \subset U_n $ for some Fatou component~$ U_n $. A simple normality argument shows that, if $ U \cap \kfc \neq \emptyset $, then $ U \subset \kfc. $

The   following properties of $I(f)$ were proved   by Eremenko~\cite{E}:
\begin{equation*}\label{Iprops}
I(f)\neq \emptyset,\;\; I(f)\cap J(f)\neq\emptyset,\;\;J(f)=\partial I(f),
\end{equation*}
and $\overline{I(f)}$ has no bounded components.

The proofs that these properties   also   hold for $\kfc$ are similar to those for $I(f)$ so we give only brief details.

\begin{theorem}\label{Qprops}
Let $f$ be a {\tef}. Then
\begin{equation}\label{Qprops1}
\kfc\neq \emptyset,\;\; \kfc\cap J(f)\neq\emptyset,\;\;J(f)=\overline{\kfc\cap J(f)},\;\;J(f)=\partial \kfc,
\end{equation}
and $\overline{\kfc}$ has no bounded components.
\end{theorem}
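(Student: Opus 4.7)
The plan is to handle the five assertions in order, using the inclusion $I(f)\subset\kfc$ for the first two, the blowing-up property of $J(f)$ for the next two, and Theorem~\ref{infty} for the last. The assertions $\kfc\neq\emptyset$ and $\kfc\cap J(f)\neq\emptyset$ are immediate from Eremenko's corresponding results for $I(f)$ in \cite{E} together with the inclusion $I(f)\subset\kfc$.

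For the density claim $J(f)=\overline{\kfc\cap J(f)}$, only density needs to be shown since $\kfc\cap J(f)\subset J(f)$ and $J(f)$ is closed. I would pick some $w\in\kfc\cap J(f)$ from the previous step, replacing it by a $J$-preimage if necessary so that $w$ is not a Picard exceptional value of $f$. Given $z\in J(f)$ and any neighbourhood $U$ of $z$, the blowing-up property of the Julia set yields $n\in\N$ with $w\in f^n(U)$, so there exists $z'\in U$ with $f^n(z')=w$. Complete invariance of $J(f)$ gives $z'\in J(f)$, and the identity $\kfnc=\kfc$ noted at the start of this section gives $z'\in\kfc$, so $z'\in U\cap\kfc\cap J(f)$.

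For $J(f)=\partial\kfc$, I would prove the two inclusions separately. The inclusion $\partial\kfc\subset J(f)$ follows from the Fatou component dichotomy remarked upon above the theorem: each Fatou component $V$ lies entirely in $\kfc$ or entirely in $K(f)$, so every point of $F(f)$ is an interior point of one of these sets, forcing $\partial\kfc\cap F(f)=\emptyset$. For the reverse inclusion, each $z\in J(f)$ is in the closure of $\kfc$ by the preceding density statement, and is also in the closure of the repelling periodic points of $f$, which lie in $K(f)=\C\setminus\kfc$; hence $z\in\partial\kfc$.

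Finally, the no-bounded-components claim follows immediately from Theorem~\ref{infty}: the closure in $\hat\C$ of the connected set $\kfc\cup\{\infty\}$ equals the connected set $\overline{\kfc}\cup\{\infty\}$, so any bounded component of $\overline{\kfc}$ would give a separate component of the latter set disjoint from $\{\infty\}$, contradicting connectedness. I do not anticipate any serious obstacle; the most delicate point is in the density argument, where one must verify that the preimage $z'$ lies simultaneously in $J(f)$ and $\kfc$, but this is handled cleanly by complete invariance of both sets under $f$.
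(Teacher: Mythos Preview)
Your treatment of the first four assertions matches the paper's: both deduce the first two from Eremenko's results for $I(f)$ via $I(f)\subset\kfc$, obtain the third from the blowing-up property, and derive $J(f)=\partial\kfc$ from density of $\kfc\cap J(f)$ in $J(f)$ together with density of repelling periodic points.

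For the final assertion, your route differs from the paper's. The paper argues directly: if $\overline{\kfc}$ had a bounded component~$E$, one could surround~$E$ by an annulus~$A$ avoiding $\overline{\kfc}$; Montel's theorem then forces $A$ into a Fatou component, which must be multiply connected (since $J(f)=\partial\kfc$) and hence lie in $I(f)\subset\kfc$, a contradiction. You instead invoke Theorem~\ref{infty}. This is logically permissible, since the proof of Theorem~\ref{infty} (given later as Theorem~\ref{bounddomkfc}) does not use the no-bounded-components statement, so there is no circularity --- but it does forward-reference a result proved after Theorem~\ref{Qprops}. The paper's argument has the advantage of being self-contained at this point; yours is shorter once Theorem~\ref{infty} is available.

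One point in your final step deserves more care. You assert that a bounded component~$E$ of $\overline{\kfc}$ would be a separate component of $\overline{\kfc}\cup\{\infty\}$. This is true but not immediate: one must rule out the possibility that adjoining $\infty$ enlarges the component containing~$E$. The clean justification uses the boundary-bumping lemma (Lemma~\ref{Newman} in the paper): if the component $E'$ of $\overline{\kfc}\cup\{\infty\}$ containing~$E$ also contained~$\infty$, then every component of $E'\setminus\{\infty\}$ would have $\infty$ in its closure and hence be unbounded, contradicting the fact that the bounded set~$E$ is a maximal connected subset of $\overline{\kfc}\supset E'\setminus\{\infty\}$.
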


\begin{remark*}
In view of the considerable interest in Eremenko's conjecture, mentioned in Section~\ref{intro}, it is natural to ask whether all the components of $\kfc$ are unbounded.
\end{remark*}

\begin{proof}[Proof of Theorem~\ref{Qprops}]
The first two properties in \eqref{Qprops1}   follow immediately from the corresponding properties of $ I(f) $, and the fact that $ I(f) \subset \kfc. $  The   third property follows from the second by the blowing up property of $J(f)$; see Lemma~\ref{blowup}.

 It follows from the third property in \eqref{Qprops1} that $J(f)\subset \overline{\kfc}$.  Now since the repelling periodic points of $ f $ are dense in $ J(f) $ (see \cite[Theorem 1]{Bak68}), any open set $ G \subset \kfc $ satisfies $ G \subset F(f) $.  Hence $J(f)\subset \partial \kfc$. On the other hand, no point of $\partial \kfc$ can lie in $F(f)$, since any such point would have a neighbourhood in $\kfc$. We conclude that $J(f)=\partial \kfc$.

Finally, if $\overline{\kfc}$ has a bounded component, $E$ say, then there is an open topological annulus $A$    that surrounds $E$ and lies in the complement of $\overline{\kfc}$.   Since $\overline{\kfc}$ is completely invariant under $f$, we deduce by Montel's theorem that   $A$ lies in a component of $ F(f)$, and this component must be \mconn\ since $J(f)=\partial \kfc$.    But any {\mconn} Fatou component of~$f$ is contained in $I(f)$ (see~\cite[Theorem~3.1]{iB84}) and hence in $\kfc$, so we obtain a contradiction. This completes the proof of Theorem~\ref{Qprops}.
\end{proof}

\begin{corollary}\label{openclosed}
Let~$f$ be a \tef. Then $\kfc$ is neither open nor closed.
\end{corollary}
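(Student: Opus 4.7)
The plan is to derive both failures, openness and closedness, from the identity $J(f)=\partial \kfc$ established in Theorem~\ref{Qprops}, combined with the other properties listed there and the well known density of repelling periodic points in the Julia set.

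First I would show $\kfc$ is not open. If it were, every point of $\kfc$ would have a neighbourhood contained in $\kfc$, so $\kfc$ would be disjoint from its boundary. By $J(f)=\partial \kfc$ this forces $\kfc\cap J(f)=\emptyset$, directly contradicting the second property in \eqref{Qprops1}.

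Next I would show $\kfc$ is not closed. If it were, then $\partial \kfc\subset \kfc$, so by $J(f)=\partial \kfc$ we would have $J(f)\subset \kfc$, equivalently $K(f)\subset F(f)$. But for a transcendental entire function the repelling periodic points are dense in $J(f)$ (as cited in the proof of Theorem~\ref{Qprops}), and any periodic point has a finite, hence bounded, orbit; thus there exist points of $J(f)\cap K(f)$, contradicting $J(f)\subset \kfc$.

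There is no real obstacle here: both halves are immediate from Theorem~\ref{Qprops} together with the existence of (repelling) periodic points in $J(f)$. The only thing to be careful about is to use the already-proved identity $J(f)=\partial \kfc$ rather than trying to argue from the definition of $\kfc$ directly, since the latter would obscure what is really a simple topological dichotomy.
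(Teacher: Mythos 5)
Your argument is correct and rests on the same foundation as the paper's, namely the identities $J(f)=\partial\kfc$ and $\kfc\cap J(f)\neq\emptyset$ from Theorem~\ref{Qprops}. The only differences are minor: for the ``not open'' half, where the paper appeals to the observation (made in the proof of Theorem~\ref{Qprops}) that any open subset of $\kfc$ lies in $F(f)$, you instead use the purely topological fact that an open set is disjoint from its own boundary, which is a slight simplification; and for the ``not closed'' half, you supply explicitly the reason $J(f)\subset\kfc$ is impossible (repelling periodic points lie in $J(f)\cap K(f)$), a point the paper leaves tacit.
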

\begin{proof}
  If $\kfc$ is open, then this implies that $\kfc\subset F(f)$, which is a contradiction since $\kfc\cap J(f)\ne\emptyset$.  If $ \kfc $ is closed, then since  $J(f)=\partial \kfc$ we have $ J(f) \subset \kfc $, which is again a contradiction.
\end{proof}

Next, we consider Fatou components in $ \kfc $ and their boundaries. A Fatou component $ U $ in $ \kfc $  must  be a Baker domain, a preimage of a Baker domain or a wandering domain.  The definitions are as follows, where as before $ U_n $ denotes the Fatou component containing $ f^n(U) $, for $ n \in \N $:
\begin{itemize}
\item if $ U = U_p $ for some $ p \in \N $,  so $ U $  is periodic with period $ p $, then $ U $ is a \textit{Baker domain} and has the property that $ f^{np}(z) \to \infty $ as $ n \to \infty $ for all $ z \in U; $
\item if $ U $ is not eventually periodic,  that is, $ U_m \neq U_n $ whenever $ m \neq n $,  then $ U $ is a \textit{wandering domain}.
\end{itemize}

We refer to \cite{wB93}, for example, for further information on the classification of Fatou components.

We now state a number of results on the boundaries of the possible types of Fatou components in $ \kfc $, and prove a simple consequence of these results that we use later in the paper.

Clearly, Baker domains and their preimages lie in $ I(f) $.  A Baker domain $ U $ of period $ p $ is said to be \emph{univalent} if $ f^p $ is univalent in $ U. $  Our first lemma is a simple corollary of \cite[Theorem 1.1]{RS14}.

\begin{lemma}
\label{unibak}
Let $ f $ be a \tef\ and let $ U $ be a univalent Baker domain of $ f $.  Then $ \partial U \cap I(f) \neq \emptyset. $
\end{lemma}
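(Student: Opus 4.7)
The plan is to derive the lemma as a direct application of \cite[Theorem~1.1]{RS14}, which is dedicated to the boundary behaviour of univalent Baker domains of transcendental entire functions and is tailored precisely to produce escaping boundary points.

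The method of that paper, which one would trace through briefly to confirm the fit, is to conjugate $f^p|_U$ to a model map on a half-plane via a Riemann map $\phi \colon \mathbb{H} \to U$; because $f^p$ is univalent on $U$, the resulting inner function is in fact a M\"obius transformation (of parabolic or hyperbolic type, since $U$ is a Baker domain and all orbits escape).  The analysis in \cite{RS14} of how $\phi$ and this model map extend to $\partial \mathbb{H}$ yields finite points $\zeta \in \partial U$ whose full $f$-orbits tend to $\infty$.  Any such $\zeta$ lies in $\partial U \cap I(f)$, which proves the lemma.

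I do not expect any significant obstacle, which is consistent with the authors' description of the lemma as a \emph{simple} corollary.  In a written-out proof, the only tasks are: (i) to verify that the present $U$ and $f$ meet the hypotheses of \cite[Theorem~1.1]{RS14} (namely, that $U$ is a univalent Baker domain of a \tef, which is immediate from the statement of the lemma); and (ii) to quote the conclusion of that theorem, extracting a point $\zeta \in \partial U$ with $f^n(\zeta) \to \infty$.  The only subtlety that could conceivably arise \---\ if the cited theorem were phrased in terms of the iterates $(f^{np})$ rather than $(f^n)$ \---\ is to promote escape along the subsequence to escape along the full orbit; this would be handled using that each component $U_j$ in the periodic cycle of $U$ is itself a Baker domain with the boundary of $\partial U_j$ contained in $J(f)$, but we do not expect even this step to be needed.
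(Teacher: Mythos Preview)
Your proposal is correct and matches the paper's own treatment exactly: the paper gives no separate proof but simply records the lemma as a direct corollary of \cite[Theorem~1.1]{RS14}. Your additional sketch of the conjugacy argument and the caveat about $f^{np}$ versus $f^n$ are harmless elaborations, but nothing beyond the bare citation is required.
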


It is an interesting open question whether the conclusion of Lemma \ref{unibak} applies for \emph{any} Baker domain $U$ of a \tef. The following property of the boundary of a non-univalent Baker domain was proved by Baker and Dom\'{i}nguez \cite[Corollary 1.3]{BD1}.

\begin{lemma}
\label{nonunibak}
Let $ f $ be a \tef\ and let $ U $ be a Baker domain of $ f $ such that $ f $ is not univalent in $ U $.  Then $ \partial U $ is disconnected.
\end{lemma}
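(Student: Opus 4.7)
The plan is to argue by contradiction, assuming that $\partial U$ is connected in $\C$. By a classical theorem of Baker, every \mconn\ Fatou component of a \tef\ is a wandering domain; since $U$ is periodic, $U$ must be \sconn. Replacing $f$ by $f^p$, where $p$ is the period of $U$, we may assume that $U$ is $f$-invariant: non-univalence of $f$ on $U$ clearly passes to $f^p$, and the Fatou components of $f$ and $f^p$ coincide. Note that $\infty\in\partial U$ because $f^n\to\infty$ on $U$, so connectedness of $\partial U$ in $\C$ is equivalent to $\infty$ being a non-cut point of $\partial U\cup\{\infty\}$ in $\widehat\C$.

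The next step is to transfer the dynamics to the unit disk via a Riemann map $\phi\colon\D\to U$, setting $g=\phi^{-1}\circ f\circ\phi$. Then $g$ is a holomorphic self-map of $\D$ with no fixed point (since $f^n|_U\to\infty$ and $\infty\notin U$), so by the Denjoy--Wolff theorem there is a unique $\zeta_0\in\partial\D$ with $g^n\to\zeta_0$ locally uniformly on $\D$. Under the Carathéodory prime end correspondence, $\zeta_0$ yields a prime end of $U$ landing at $\infty$. The hypothesis that $f$ is not univalent on $U$ means $g$ is not a M\"obius automorphism of $\D$.

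The main step, and the principal obstacle, is to deduce from the non-univalence of $g$ that $\infty$ is accessible from $U$ via at least two inequivalent prime ends. The natural tool is Cowen's conjugacy theorem, which furnishes an absorbing subdomain $V\subset U$ on which some iterate of $f$ is semi-conjugate to one of three standard linear models ($z\mapsto z+1$ on a half-plane or on $\C$, or $z\mapsto\lambda z$ on a half-plane). In the non-univalent case the semi-conjugacy has degree at least two, producing at least two distinct sequences of ``fundamental translates'' in $U$ whose iterates all escape to $\infty$ along topologically different routes; this yields the required pair of inequivalent accesses to $\infty$ from within $U$.

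To conclude, one picks disjoint simple arcs $\gamma_1,\gamma_2$ in $\overline U\subset\widehat\C$ from a common point $p\in U$ to $\infty$ realising these two accesses. Then $\gamma_1\cup\gamma_2$ is a Jordan curve in $\widehat\C$ through $p$ and $\infty$, and each of its two complementary domains must contain points of $\partial U$: if one such domain were entirely contained in $U$, then its ``sector'' near $\infty$ would provide a connecting arc in $U$ joining $\gamma_1$ and $\gamma_2$ inside every neighbourhood of $\infty$, contradicting inequivalence of the accesses. Removing $\infty$ from this Jordan curve therefore leaves the two pieces of $\partial U$ lying in disjoint open subsets of $\C$, contradicting the assumed connectedness of $\partial U$.
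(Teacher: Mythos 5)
The paper does not give its own proof of this lemma; it simply cites \cite[Corollary 1.3]{BD1}, so the relevant comparison is with Baker and Dom\'{i}nguez's argument. Your overall framework (pass to the invariant case, use the Riemann map and the Denjoy--Wolff point of the induced inner function $g$, produce two inequivalent accesses to $\infty$, and close with a Jordan-curve separation argument) is indeed the right kind of approach, and your final step is correct: if $\gamma_1,\gamma_2$ realise inequivalent accesses to $\infty$ from a common basepoint, then each complementary Jordan domain of $\gamma_1\cup\gamma_2$ in $\widehat\C$ must meet $\partial U$, and since $\partial_\C U$ avoids $\gamma_1\cup\gamma_2\setminus\{\infty\}$, this disconnects $\partial_\C U$.

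The gap is exactly where you flag it, and the appeal to Cowen's theorem does not fill it. Cowen's model map $\sigma\colon\D\to\Omega$ is univalent \emph{on} the fundamental set $V$, and what non-univalence of $g$ gives you is only that the extension of $\sigma$ to all of $\D$ fails to be injective somewhere in the interior. That is an interior phenomenon: it says nothing, by itself, about the prime-end structure of $U$ at $\infty$. The ``fundamental translates'' $g^{-n}(V)$ form a single increasing exhaustion of $\D$, not a collection of topologically distinct routes, so the claim that non-univalence ``produces at least two distinct sequences of fundamental translates escaping along topologically different routes'' has no justification and is not how Cowen's theorem works. In short, you have asserted the key implication (non-univalence of $g$ $\Rightarrow$ two inequivalent accesses to $\infty$) rather than proved it.

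A more promising route, closer to what Baker and Dom\'{i}nguez actually do, is to work with the boundary behaviour of the inner function directly. When $g$ is a finite Blaschke product of degree $d\ge 2$, $g$ extends continuously to $\partial\D$, the Denjoy--Wolff point $\zeta_0$ then has $d$ preimages on $\partial\D$ including itself, and one can show (using $\phi\circ g=f\circ\phi$ together with the known fact that the radial limit of $\phi$ at $\zeta_0$ is $\infty$) that each of these preimages is a prime end of $U$ with radial limit $\infty$, giving the desired second access. But this is only the finite-degree case; Baker domains can have infinite degree, and then $g$ is an infinite-degree inner function that does not extend continuously to $\partial\D$. Handling that case is the real content of the Baker--Dom\'{i}nguez proof and requires substantially more of the theory of inner functions and prime ends than your proposal supplies. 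So the proposal is a plausible outline but is missing the proof of its central claim.
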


The next lemma follows from a general result on the boundaries of wandering domains \cite[Theorem 1.5]{OS}.

\begin{lemma}
\label{kfcwand}
Let $ f $ be a \tef\ and let $ U $ be a wandering domain of $ f $ such that $ U \subset \kfc $.  Then $ \partial U \cap \kfc \neq \emptyset. $
\end{lemma}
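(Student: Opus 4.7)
The goal is to produce a boundary point of $U$ with unbounded forward orbit. The approach combines a normality reduction on $U$ with an inner-function / boundary-correspondence argument in the simply connected case, and with the geometry of Baker wandering domains in the multiply connected case.

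\emph{Step 1 (normality).} Fix $z_0\in U$; since $U\subset\kfc$, I extract a subsequence $(n_k)$ with $|f^{n_k}(z_0)|\to\infty$. Because $U\subset F(f)$, normality of $\{f^n|_U\}$ forces, after a further extraction, that $f^{n_k}\to\infty$ locally uniformly on $U$.

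\emph{Step 2 (simply connected $U$).} Let $\phi\colon\D\to U$ be a Riemann map with $\phi(0)=z_0$, and set $h_k:=f^{n_k}\circ\phi$; then $h_k\colon\D\to\C$ is holomorphic with $|h_k(0)|\to\infty$. Fatou's theorem supplies radial limits of $\phi$ and of each $h_k$ at a.e.\ $\theta\in[0,2\pi)$, and the prime-end correspondence identifies the finite radial limits of $\phi$ with points of $\partial U\subset\C$. The plan is to locate an angle $\theta$ at which (i) $w:=\phi^*(e^{i\theta})\in\partial U$ is finite and (ii) $\{|h_k^*(e^{i\theta})|\}_k$ is unbounded; by Lindel\"of's theorem applied to the entire function $f^{n_k}$, (i) and (ii) combine to give $|f^{n_k}(w)|\to\infty$ along a subsequence, placing $w$ in $\partial U\cap\kfc$.

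\emph{Step 3 (multiply connected $U$).} By the result cited in the proof of Theorem~\ref{Qprops}, $U\subset I(f)$; moreover $\partial U$ contains nested compact components $\gamma_n$ whose forward orbits escape to infinity. A nested-compactness argument on $\bigcap_n f^{-n}(\overline{V_n})$, for suitable annular neighbourhoods $V_n$ of $\gamma_n$, yields a boundary point $w$ with $f^n(w)\to\infty$, hence in $\kfc$.

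\emph{Main obstacle.} The delicate step is the selection in Step 2: a naive $H^\infty$-type maximum estimate provides only one ``large'' angle per $k$, so to diagonalise over $k$ without losing the chosen angle one needs a genuine positive-measure or harmonic-measure argument, combined with careful control of the prime-end behaviour of $\phi$ so that the finite radial limit coincides with the required boundary point. These difficulties are precisely what \cite[Theorem~1.5]{OS} packages into a single statement, so directly invoking that theorem (as the authors do) neatly sidesteps the technical work in Step 2.
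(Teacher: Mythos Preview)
The paper does not actually prove this lemma; it merely states that the result follows from \cite[Theorem~1.5]{OS}. Your proposal is not a self-contained proof either---it is an outline that, by your own acknowledgement, leaves the crucial Step~2 selection argument unresolved and then defers to the very same citation. So ultimately you and the paper agree: the substance of the argument lives in \cite{OS}.

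Two remarks on your sketch. First, in Step~2 the appeal to Lindel\"of is superfluous: each $f^{n_k}$ is entire, so if $\phi$ has a finite radial limit $w$ at $e^{i\theta}$ then $h_k=f^{n_k}\circ\phi$ has radial limit $f^{n_k}(w)$ there simply by continuity of $f^{n_k}$ at~$w$. The genuine difficulty---which you correctly flag---is the measure-theoretic diagonalisation: one must find a \emph{single} angle $\theta$ (with finite $\phi^*(e^{i\theta})$) at which $|f^{n_k}(\phi^*(e^{i\theta}))|$ is unbounded in~$k$, and a per-$k$ ``large'' angle does not suffice. Second, your Step~3 is imprecise: the boundary components of~$U$ are not themselves ``nested'', and the nested-compactness scheme you describe is not clearly well-posed. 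In the multiply connected case the cleanest route is that the outer boundary component of a Baker wandering domain lies in the fast escaping set $A(f)\subset I(f)\subset\kfc$, which gives $\partial U\cap\kfc\neq\emptyset$ immediately.
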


We now prove the following consequence of Lemmas \ref{unibak}, \ref{nonunibak} and \ref{kfcwand}.

\begin{lemma}
\label{kfcjf}
Let $ f $ be a \tef.  Then every component of $ \kfc $ that is neither a non-univalent Baker domain nor a preimage of such a domain must meet $ J(f) $.  In particular, every component of $ \kfc $ with connected boundary meets $ J(f). $
\end{lemma}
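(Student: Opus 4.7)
The plan is to argue the contrapositive: assume a component $C$ of $\kfc$ does not meet $J(f)$, and show that $C$ must be a non-univalent Baker domain or a preimage of one. Since $C\cap J(f)=\emptyset$ forces $C\subset F(f)$ and $C$ is connected, $C$ lies in a single Fatou component $U$. The normality remark at the start of Section~\ref{basic} ensures $U\subset \kfc$, so $U$ is itself a connected subset of $\kfc$ containing $C$, and by maximality $C=U$. Thus $U$ is a Baker domain, a preimage of a Baker domain, or a wandering domain in $\kfc$.

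The main body of the argument rules out the three surviving possibilities: $U$ is a univalent Baker domain, a preimage of a univalent Baker domain, or a wandering domain. In each case I aim to produce $\zeta\in\partial U\cap\kfc$. Since $\partial U\subset J(f)$, the set $U\cup\{\zeta\}$ is then a connected subset of $\kfc$ strictly larger than $U=C$, contradicting maximality of $C$. When $U$ is a univalent Baker domain, Lemma~\ref{unibak} delivers $\zeta\in\partial U\cap I(f)\subset\partial U\cap\kfc$; when $U$ is a wandering domain in $\kfc$, Lemma~\ref{kfcwand} gives the same conclusion. For $U$ a preimage of a univalent Baker domain $V$ with $f^k(U)\subset V$, I apply Lemma~\ref{unibak} to $V$ to obtain $\eta\in\partial V\cap I(f)$, and then exhibit a preimage of $\eta$ under $f^k$ on $\partial U$. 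The idea is to approximate $\eta$ by points $\eta_j\in V$, take preimages $\zeta_j\in U$ with $f^k(\zeta_j)=\eta_j$, and extract a convergent subsequence whose limit $\zeta$ must lie in $\partial U$ (since $\eta\notin V$) and satisfy $f^k(\zeta)=\eta$; backward invariance of $I(f)$ then gives $\zeta\in I(f)\cap\partial U\subset\kfc$.

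For the ``in particular'' clause, a component $C$ of $\kfc$ with connected boundary cannot be a non-univalent Baker domain by Lemma~\ref{nonunibak}, and cannot be a preimage of one either: one has $\partial U\subset f^{-k}(\partial V)$, so any disconnection of $\partial V$ pulls back under the continuous map $f^k$ to a disconnection of $\partial U$, provided both pieces meet $f^k(\partial U)$, which is handled as in the preimage case above. The main statement then applies. The principal obstacle in the argument is the preimage of a univalent Baker domain: one must ensure that not all preimages of $\eta_j$ in $U$ escape to $\infty$, so that a genuine finite boundary preimage of $\eta$ exists on $\partial U$. The wandering and univalent Baker cases are essentially immediate from the cited lemmas, but the preimage case requires this additional compactness-type control.
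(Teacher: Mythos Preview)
Your argument is essentially the paper's: assume $C\cap J(f)=\emptyset$, deduce that $C$ equals a Fatou component $U\subset\kfc$, classify $U$, and in each surviving case produce $\zeta\in\partial U\cap\kfc$ to contradict maximality. The paper does not split off the preimage-of-a-univalent-Baker-domain case as you do; it simply says ``some iterate of $f$ maps $C$ to a univalent Baker domain'' and invokes Lemma~\ref{unibak} together with the forward fact that $f$ takes boundary points of Fatou components to boundary points of Fatou components.

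The one place where your write-up is incomplete is exactly the obstacle you flag: in the preimage case you need a point of $\partial V\cap I(f)$ to have a preimage on $\partial C$, and your approximation $\zeta_j\in U$, $f^k(\zeta_j)=\eta_j\to\eta$, may drift to~$\infty$ when $U$ is unbounded (and preimages of Baker domains \emph{are} unbounded). The paper is terse at precisely this point; a clean way to close the gap is to use the full strength of \cite[Theorem~1.1]{RS14} behind Lemma~\ref{unibak}, namely that $\partial V\setminus I(f)$ has harmonic measure zero with respect to~$V$. Since $V\setminus f^k(C)$ is finite, the accessible part of $\partial V$ not hit by $f^k(\partial C)$ is negligible for harmonic measure, so $f^k(\partial C)\cap I(f)\neq\emptyset$, and backward invariance of $I(f)$ gives $\partial C\cap I(f)\neq\emptyset$. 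Your compactness worry is therefore genuine, but it is resolved by appealing to the measure-theoretic statement rather than by a pointwise approximation.

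For the ``in particular'' clause, the paper simply asserts that preimages of non-univalent Baker domains also have disconnected boundaries, citing Lemma~\ref{nonunibak}. Your pull-back-the-disconnection argument is the natural justification and runs into the same surjectivity issue as above; again one needs that $f^k(\partial C)$ meets both pieces of $\partial V$, which follows from the same considerations.
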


\begin{proof}
Suppose that $ f $ is a \tef, and that some component $ C $ of $ \kfc $ does not meet $ J(f) $ and is neither a non-univalent Baker domain nor a preimage of such a domain.  Then $ C \subset U $ for some Fatou component $ U \subset \kfc $, and indeed $ C = U $ since $ C $ is a component of $ \kfc $.  Since $ C $ is not a non-univalent Baker domain or a preimage of such a domain, either some iterate of $ f $ maps $ C $ to a univalent Baker domain, or $ C $ is wandering domain.

Using the fact that~$f$ maps any boundary point of a Fatou component to a boundary point of a Fatou component, we deduce that
\begin{itemize}
\item if $ C $ is mapped to a univalent Baker domain, then $ \partial C \cap \kfc \neq \emptyset $ by Lemma \ref{unibak}, and
\item if $ C $ is a \wand, then $ \partial C \cap \kfc \neq \emptyset $ by Lemma \ref{kfcwand}.
\end{itemize}
In either case we have a contradiction, since $ \partial C \subset J(f) $ and $ C \cup \{\zeta\} $ is connected for any $\zeta\in \partial C$.

The final statement of the lemma follows from the fact that non-univalent Baker domains and their preimages have disconnected boundaries, by Lemma~\ref{nonunibak}.
\end{proof}

\begin{remark*}
If we were able to show that $ \partial U \cap I(f) \neq \emptyset $ for \emph{any} Baker domain $ U $ of a \tef\ $ f $, then it would follow that every component of $ \kfc $ meets $ J(f) $.
\end{remark*}

We now give the proof of Theorem~\ref{infty}. In fact, we prove the following, which includes a useful equivalent result.

\begin{theorem}
\label{bounddomkfc}
If $ f $ is a \tef, then the following statements hold and are equivalent.
\begin{enumerate}[(a)]
\item If $ G $ is a bounded, \sconn\ domain such that $ G \cap \kfc \neq \emptyset $, then $ \partial G \cap \kfc \neq \emptyset. $
\item $ \kfc \cup \{ \infty \} $ is connected.
\end{enumerate}
\end{theorem}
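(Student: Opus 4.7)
The plan is first to prove that (a) and (b) are equivalent by a purely topological subspace argument, and then to verify (a)---which entails (b)---by combining Theorem~\ref{Qprops} with the component-level structural information provided by Lemma~\ref{kfcjf}.

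For the equivalence, (b) $\Rightarrow$ (a) is immediate: if $G$ is a bounded simply connected domain with $G \cap \kfc \neq \emptyset$ and $\partial G \cap \kfc = \emptyset$, then $G$ being open in $\hat{\C}$ makes $G \cap \kfc$ open in $\kfc \cup \{\infty\}$, while the identity $\overline{G} \cap (\kfc \cup \{\infty\}) = G \cap \kfc$ (which follows from our hypotheses and from $G$ being bounded) makes it closed there as well; this produces a proper nontrivial clopen subset, contradicting (b). Conversely, for (a) $\Rightarrow$ (b), if $\kfc \cup \{\infty\}$ were disconnected I would extract a bounded clopen subset $A \subset \kfc$, enclose $A$ in a bounded open neighbourhood $W$ in $\hat{\C}$ with $\partial W \cap \kfc = \emptyset$, pick a component $W_0$ of $W$ meeting $A$, and form $G$ by filling in the bounded complementary components of $\hat{\C} \setminus W_0$; a brief check (using $\partial G \subset \partial W_0 \subset \partial W$) confirms that $G$ is a bounded simply connected domain with $G \cap \kfc \neq \emptyset$ and $\partial G \cap \kfc = \emptyset$, contradicting (a).

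For the substantive direction I prove (a) by contradiction. Let $G$ be as in the hypothesis of (a) and assume also that $\partial G \cap \kfc = \emptyset$; take $z_0 \in G \cap \kfc$ and let $C$ be the connected component of $\kfc$ containing $z_0$. If $C$ is unbounded, or bounded but not contained in $G$, then $C$ is a connected subset of $\kfc$ meeting both $G$ and $\C \setminus \overline{G}$, so $C \cap \partial G \neq \emptyset$, immediately contradicting $\partial G \cap \kfc = \emptyset$.

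The hard case, and the main obstacle, is when $C$ is a bounded component of $\kfc$ contained in $G$. Here Lemma~\ref{kfcjf} tells us that either $C$ meets $J(f)$ or $C$ is a non-univalent Baker domain or a preimage of one; the Baker alternative is excluded since Baker domains are unbounded, and a bounded preimage of a non-univalent Baker domain has disconnected boundary by Lemma~\ref{nonunibak}, hence is multiply connected in $\hat{\C}$ and surrounds bounded complementary components in $G$ to which the whole argument can be iterated. In the remaining case $C \cap J(f) \neq \emptyset$, I would combine the blowing-up property of $J(f)$ at a point $\zeta \in C \cap J(f)$ with the unbounded component of $\overline{\kfc}$ through $z_0$ (guaranteed by Theorem~\ref{Qprops} and forced to cross $\partial G$ at some point of $J(f)$) to produce a point of $\kfc$ on $\partial G$. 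The technical heart of the argument, which I expect to be the main difficulty, is this last step: upgrading the accumulation of $\kfc$-points near a crossing point in $\partial G \cap (J(f) \setminus \kfc)$ to an actual point of $\kfc$ lying on $\partial G$ itself.
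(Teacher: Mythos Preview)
Your equivalence argument between (a) and (b) is fine and close in spirit to the paper's. The gap is in your proof of (a): the step you yourself flag as the ``technical heart'' is not just difficult, it is precisely the missing ingredient. Knowing that the unbounded component of $\overline{\kfc}$ through $z_0$ crosses $\partial G$ only gives you a point of $\overline{\kfc}=\kfc\cup J(f)$ on $\partial G$, and the blowing-up property offers no mechanism for upgrading a point of $J(f)\setminus\kfc$ on $\partial G$ to a genuine point of $\kfc$ on $\partial G$. You have reduced the problem to exactly the statement of Lemma~\ref{bounddomif} (which is \cite[Lemma~4.1]{RS11}): if $G$ is a bounded simply connected domain with $G\cap J(f)\neq\emptyset$, then $\partial G\cap I(f)\neq\emptyset$. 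This is the key external input, and once you invoke it your argument collapses to the paper's.

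In fact the paper's route is more direct than yours and avoids the detour through components of $\kfc$ altogether. It simply splits on whether $G\cap J(f)\neq\emptyset$: if so, Lemma~\ref{bounddomif} gives $\partial G\cap I(f)\neq\emptyset$ and hence $\partial G\cap\kfc\neq\emptyset$; if not, then $G\subset F(f)$, so $G$ lies in a Fatou component $U\subset\kfc$, and one may assume $G=U$, whence $G$ is a bounded wandering domain (Baker domains \emph{and their preimages} are unbounded, since $U_1\setminus f(U)$ is at most a point) and Lemma~\ref{kfcwand} finishes. Note in particular that your ``bounded preimage of a non-univalent Baker domain'' case never arises, so the vague iteration argument you sketch there is unnecessary.
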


Note that if $E\cup \{\infty\}$ is connected, where $E$ is a subset of $\C$, then it does not follow that the components of~$E$ are all unbounded, unless~$E$ is closed.

The proof of Theorem \ref{bounddomkfc} is similar to that of the corresponding result for $I(f)$ given in~\cite[Theorem 4.1]{RS11}.  In particular, we use the following lemma.

\begin{lemma} \cite[Lemma 4.1]{RS11}
\label{bounddomif}
Let $ f $ be a \tef.  If $ G $ is a bounded, \sconn\ domain such that $ G \cap J(f) \neq \emptyset $, then $ \partial G \cap I(f) \neq \emptyset. $
\end{lemma}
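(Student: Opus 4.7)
The plan is to locate the required point of $\partial G \cap I(f)$ using the fast escaping set $A(f) \subset I(f)$ from \cite{RS05}. Two properties of $A(f)$ are essential: (i) $A(f)$ is backward invariant, that is, $f^{-1}(A(f)) \subset A(f)$, which follows directly from its definition via the iterated maximum modulus; and (ii) every connected component of $A(f)$ is unbounded, which is one of the main results of \cite{RS05}.

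The first step is to show that $G \cap A(f) \neq \emptyset$. Since $G$ is open and $G \cap J(f) \neq \emptyset$, the blowing-up property of the Julia set (the lemma referenced as Lemma~\ref{blowup}) supplies an $N \in \N$ such that $f^N(G)$ contains any prescribed compact subset of $\C$ that avoids at most one exceptional value of $f$. Choosing this compact set to be a single point $w \in A(f)$, distinct from any exceptional value, yields some $z_0 \in G$ with $f^N(z_0) = w$. Property (i) then gives $z_0 \in A(f)$, and hence $G \cap A(f) \neq \emptyset$.

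For the second step, let $C$ be the connected component of $A(f)$ containing $z_0$. By (ii), $C$ is unbounded. Since $G$ is bounded and $z_0 \in C \cap G$, the connected set $C$ cannot be contained in $G$, so $C$ also meets $\C \setminus \overline{G}$. A standard connectedness argument then forces $C \cap \partial G \neq \emptyset$: otherwise $C$ would decompose as a disjoint union $(C \cap G) \cup (C \cap (\C \setminus \overline{G}))$ of two nonempty relatively open subsets of $C$, contradicting connectedness. Because $C \subset A(f) \subset I(f)$, any point of $C \cap \partial G$ is a point of $\partial G \cap I(f)$, as required.

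The principal obstacle is fact (ii), the unboundedness of the components of $A(f)$, which is a genuinely nontrivial result of Rippon and Stallard; without it one would need a more delicate direct analysis of orbits on $\partial G$. I also note that the simple connectedness of $G$ plays no role in the above sketch, so in principle the conclusion extends to any bounded domain meeting $J(f)$; the hypothesis is presumably retained in \cite{RS11} because that is the form in which the lemma is applied.
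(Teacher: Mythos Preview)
The paper does not give its own proof of this lemma; it is quoted from \cite[Lemma~4.1]{RS11} and invoked as a black box in the proof of Theorem~\ref{bounddomkfc}. Your argument is correct and is essentially the standard one: the blowing-up property together with the backward invariance of $A(f)$ yields a point of $A(f)$ inside $G$, and then the fact from \cite{RS05} that every component of $A(f)$ is unbounded forces that component to meet $\partial G$. Your closing remark that the simple-connectivity hypothesis on $G$ plays no role is also correct.
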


\begin{proof}[Proof of Theorem \ref{bounddomkfc}]
We first prove~(a), and then show that this implies~(b).  Since it is clear that~(b) implies~(a), this will prove the theorem.

Let $ G $ be a bounded, \sconn\ domain that meets $ \kfc. $ If $ G \cap J(f) \neq \emptyset, $  then $ \partial G \cap \kfc \neq \emptyset $ by Lemma \ref{bounddomif}.  Thus we may assume that $ G \subset U $ for some Fatou component $ U \subset \kfc, $ and indeed that $ G = U $, because otherwise we again have $ \partial G \cap \kfc \neq \emptyset $.  Since Baker domains and their preimages are unbounded, $ G $ must be a wandering domain, and it follows from Lemma \ref{kfcwand} that we again have $ \partial G \cap \kfc \neq \emptyset. $  This proves statement~(a).

To show that (a) implies (b), suppose that $ \kfc \cup \{ \infty \} $ is not connected.  Then there exist disjoint open sets $ G_1, G_2 \subset \widehat{\C} $ such that
\begin{equation}
\label{connect}
\kfc \cup \{ \infty \} \subset G_1 \cup G_2
\end{equation}
and
$$ G_i \cap (\kfc \cup \{ \infty \}) \neq \emptyset, \qfor i = 1,2.$$
We can assume that $ G_1 $ is bounded and \sconn, and that $ \infty \in G_2. $  Since $ G_1 $ meets $ \kfc $, it follows from (a) that $ \partial G_1 \cap \kfc \neq \emptyset, $ which contradicts~(\ref{connect}).  Thus $ \kfc \cup \{ \infty \} $ is connected, as required.
\end{proof}

\section{Proof of Theorem \ref{minconn}}
\label{connected}
\setcounter{equation}{0}
Theorem \ref{minconn} states that, if $ f $ is a \tef\ and there exists $ r>0 $ such that $ m^n(r) \to \infty, $ then $ \kfc $ is connected.  In this section, we prove the following more general result, which shows that $ \kfc $ is connected for an even wider class of functions.

We use the notation $ \N_0 $ for the set of non-negative integers $ \N \cup \lbrace 0 \rbrace $, and we say that a set $ A \subset \C $ \textit{surrounds} a set $ B \subset \C $ if $ B $ lies in a bounded component of the complement of $ A. $

\begin{theorem}
\label{fullresult}
Suppose that $ f $ is a \tef\ and there exists a sequence of bounded, \sconn\ domains  $ (D_n)_{n \in \N_0} $ such that
\begin{enumerate}[(a)]
\item $ f(\partial D_n) \text{ surrounds } D_{n+1}, \text{ for } n \in \N_0, $ and
\item every disc centred at $ 0 $ is contained in $ D_n $ for sufficiently large $ n $.
\end{enumerate}
Then $ \kfc $ is connected.
\end{theorem}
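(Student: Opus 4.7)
The plan is to combine Theorem~\ref{bounddomkfc} with a construction of surrounding continua in $\kfc$. By Theorem~\ref{bounddomkfc}, $\kfc \cup \{\infty\}$ is connected in $\widehat{\C}$, so every component of $\kfc$ is unbounded in $\C$; the remaining task is to rule out multiple components.

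For each sufficiently large $N$, I would construct a connected closed set $\Gamma_N \subset \kfc$ that surrounds $\overline{D_N}$, in the sense that $\overline{D_N}$ lies in a bounded component of $\C \setminus \Gamma_N$. Granting such $\Gamma_N$: any unbounded component $C$ of $\kfc$ that meets $D_N$ must cross $\Gamma_N$ by connectedness, so $C$ lies in the (single) component of $\kfc$ containing $\Gamma_N$. Since by hypothesis~(b) every component of $\kfc$ contains a point in some $D_N$, all such components must coincide, yielding connectedness. The natural source of $\Gamma_N$ is iterated preimages under $f$: condition (a) together with the argument principle yields $f^k(D_n) \supset D_{n+k}$, so for $n$ large the main component $V_n$ of $f^{-(n-N-1)}(D_n) \cap D_{N+1}$ (chosen to contain $\overline{D_N}$) has boundary $\partial V_n$ which is a continuum surrounding $\overline{D_N}$. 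On $\partial V_n$ the $(n-N-1)$st iterate of $f$ takes values in $\partial D_n$, so the $(n-N)$th iterate takes values in $f(\partial D_n)$, and hence has modulus at least the inradius of $D_{n+1}$ by (a); this bound tends to $\infty$ with $n$.

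The main obstacle is passing from the approximating continua $(\partial V_n)$ to a genuine continuum $\Gamma_N \subset \kfc$. The modulus bound holds on $\partial V_n$ at an iterate index $n-N$ that grows with $n$, so continuity of the iterates of $f$ does not automatically transfer the property ``orbit is unbounded'' to a Hausdorff subsequential limit of the $\partial V_n$. Making this step rigorous will likely require a careful diagonal argument, or a Baire-type argument producing a dense $G_\delta$ subset of the limit on which orbits are unbounded, or else a direct spiderweb-style construction in $\kfc$ in the spirit of the arguments known for the iterated-maximum-modulus case. There is also the technical point that $\overline{D_N} \subset V_n$ requires $f^{n-N-1}(\overline{D_N}) \subset D_n$, which is not forced by (a) alone and may necessitate a slightly different choice of nested sub-domains extracted from the given sequence $(D_n)$ using (b).
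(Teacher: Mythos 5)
Your proposal contains a genuine error and an acknowledged gap, and the paper's proof takes a fundamentally different route that avoids both.

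The error: you deduce from the connectedness of $\kfc\cup\{\infty\}$ that every component of $\kfc$ is unbounded. This inference is invalid for sets that are not closed, and $\kfc$ is neither open nor closed (Corollary~\ref{openclosed}); indeed the paper explicitly warns, right after Theorem~\ref{bounddomkfc}, that ``if $E\cup\{\infty\}$ is connected\ldots it does not follow that the components of~$E$ are all unbounded, unless~$E$ is closed.'' Your subsequent step (that any component meeting $D_N$ must cross $\Gamma_N$) relies on this false claim, so the overall logic of your reduction fails. The acknowledged gap about passing from the approximating continua $\partial V_n$ to a genuine continuum $\Gamma_N\subset\kfc$ is also real, and I don't see an easy repair: the escape criterion you extract from (a) holds at iterate indices that grow with $n$, so Hausdorff limits give no pointwise control on orbits.

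The paper's argument sidesteps the whole business of building continua inside $\kfc$ by arguing by contradiction and working instead in the complementary set $K(f)$, which is closed and hence topologically well-behaved. Assuming $\kfc$ is disconnected, Lemma~\ref{rempe} produces a closed connected separator $E\subset K(f)$ with (at least) two complementary components $G_1,G_2$ meeting $\kfc$; Theorem~\ref{bounddomkfc}(a) is then applied to the simply connected sets $G_i$ (not to $\kfc$ itself) to show they are unbounded, so $\partial G_1$ is an unbounded continuum inside $K(f)$. The inductive Lemma~\ref{genctm} then propagates a carefully controlled continuum $\Gamma_{n_j}\subset K(f)$ forward under $f$, always touching $\partial D_{n_j}$ while staying outside $D_{n_j}$, and the compactness lemma (Lemma~\ref{RSlemm}) produces a single point $\zeta\in\Gamma_{n_0}\subset K(f)$ whose orbit tracks through the $\Gamma_{n_{k+1}}$ and hence escapes to infinity. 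This gives $\zeta\in\kfc\cap K(f)$, a contradiction. Because all the continua live in the closed set $K(f)$, none of the limit-passage difficulties you worried about arise; and because the proof is by contradiction, there is no need for the (false) assertion that every component of $\kfc$ is unbounded. If you want to pursue your direct ``spider's web in $\kfc$'' strategy, note it would essentially prove the strictly stronger assertion that $\kfc$ is a spider's web, which the theorem does not claim and which the hypotheses likely do not yield.
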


Before proving this result we make some remarks.

\begin{enumerate}[(1)]
\item Suppose that, for a \tef\ $ f $, there exists $ r>0 $ such that $ m^n(r) \to \infty $ as $ n \to \infty $.  Define
\[ D'_n = \lbrace z \in \C : \vert z \vert < m^n(r) \rbrace, \, \textrm{ for } n \in \N_0.\]
Since  every \tef\ has points of period $ 2 $,  it follows from the definition of the minimum modulus function that, for some $ N \in \N_0, $
\[ f(\partial D'_n) \textrm{ surrounds } D'_{n+1}, \textrm{ for } n \geq N. \]
Moreover, since $ m^n(r) \to \infty $ as $ n \to \infty $, every disc centred at $ 0 $ is contained in $ D'_n $ for sufficiently large $ n $.  If we now put
\[ D_{n} = D'_{n+N},\quad \textrm{ for } n \geq 0, \]
then it is evident that the conditions of Theorem \ref{fullresult} are satisfied for the sequence of domains $ (D_n)_{n \in \N_0} $.  Thus Theorem~\ref{minconn} follows from Theorem~\ref{fullresult}. \\
There are, however, \tef s that meet the conditions of Theorem \ref{fullresult} but not those of Theorem \ref{minconn}; see Example~5.1.\\

\item Many of the functions that meet the conditions of Theorem \ref{fullresult} are \textit{\spl}, in the sense defined in \cite{O12a}, and so they have the nice properties of such functions proved in that paper. Strongly polynomial-like functions can be characterised \cite[Theorem~1.6]{O12a} as those \tef s for which there exists a sequence of bounded, \sconn\ domains $ (D'_n)_{n \in \N_0} $ such that\\
\begin{enumerate}[(i)]
\item $ f(\partial D'_n) $ surrounds  $ \overline{D'_n} $, for $ n \in \N_0, $
\item $ \bigcup_{n \in \N_0} D'_n = \C $, and
\item $ \overline{D'_n} \subset D'_{n+1} $, for $ n \in \N_0 $.\\
\end{enumerate}
It is easy to see that~$f$ is \spl\ if it satisfies the conditions of Theorem~\ref{minconn}, and also if it satisfies the conditions of Theorem~\ref{fullresult} together with a condition such as `$\overline{D}_n\subset D_{n+1}$, for arbitrarily large values of~$n$'.

Note that not all \spl\ functions meet the conditions of Theorem~\ref{fullresult} -- indeed there are \spl\ functions for which $ \kfc $ is disconnected; see Example~5.2.
\end{enumerate}

The following lemma contains the  key induction  step in the proof of Theorem~\ref{fullresult}.
\newpage
\begin{lemma}
\label{genctm}
Suppose that $ f $ is a \tef\ and there exists a sequence of bounded, \sconn\ domains  $ (D_n)_{n \in \N_0} $ such that
\begin{enumerate}[(a)]
\item $ f(\partial D_n) \text{ surrounds } D_{n+1}, \text{ for } n \in \N_0, $ and
\item every disc centred at $ 0 $ is contained in $ D_n $ for sufficiently large $ n $.
\end{enumerate}

Suppose that, for some $ j \in \N_0 $, there exists $n_j\in \N_0$ and a continuum $ \Gamma_{n_j} $ with the following properties:
\begin{enumerate}[(i)]
\item $ \Gamma_{n_j} \subset K(f) \cap (\C\setminus D_{n_j})$;
\item there is a point $ z_{n_j} \in \Gamma_{n_j} \cap \partial D_{n_j}$;
\item there is a point $ z'_{n_j} \in \Gamma_{n_j} $ such that $ f^n(z'_{n_j}) \in D_{n_j+n} $ for all $ n \in \N$.

\end{enumerate}
Then there exists $\, n_{j+1}>n_j $ and a continuum $ \Gamma_{n_{j+1}} \subset f^{n_{j+1} - n_j}(\Gamma_{n_j}) $ such that properties~(i),~(ii) and~(iii) hold with $n_j$ replaced by $n_{j+1}$ throughout.
\end{lemma}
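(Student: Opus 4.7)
The plan is to choose $k := n_{j+1} - n_j \geq 1$ to be the \emph{last} index at which the orbit of $z_{n_j}$ fails to lie in the corresponding domain of the sequence, and then to let $\Gamma_{n_{j+1}}$ be the component of $f^k(\Gamma_{n_j}) \setminus D_{n_j+k}$ that contains $f^k(z_{n_j})$.

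First I would observe that, since $z_{n_j} \in \Gamma_{n_j} \subset K(f)$, the orbit $(f^n(z_{n_j}))_{n\geq 0}$ is bounded, so it lies in some disc $B(0,R)$. By hypothesis~(b), there is $N \in \N$ with $D_n \supset B(0,R)$ for every $n \geq N$, and so $f^n(z_{n_j}) \in D_{n_j+n}$ for all sufficiently large $n$. On the other hand, $z_{n_j} \in \partial D_{n_j}$ forces $f(z_{n_j}) \in f(\partial D_{n_j})$, and by hypothesis~(a) this latter set is disjoint from $D_{n_j+1}$. Hence the set
\[
S = \{m \geq 1 : f^m(z_{n_j}) \notin D_{n_j+m}\}
\]
is nonempty and finite. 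I would then put $k = \max S$ and $n_{j+1} = n_j + k > n_j$.

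Next, property~(iii) of the induction hypothesis gives $f^k(z'_{n_j}) \in D_{n_j+k}$, so the closed subset $f^k(\Gamma_{n_j}) \setminus D_{n_j+k}$ of the continuum $f^k(\Gamma_{n_j})$ is proper. I would define $\Gamma_{n_{j+1}}$ to be the component of this subset containing the point $f^k(z_{n_j})$, which lies there by the choice of $k$. Then $\Gamma_{n_{j+1}}$ is a continuum in $f^{n_{j+1}-n_j}(\Gamma_{n_j})$. Property~(i) is immediate, since $f^k(\Gamma_{n_j}) \subset K(f)$ by forward invariance of $K(f)$, and $\Gamma_{n_{j+1}} \cap D_{n_{j+1}} = \emptyset$ by construction. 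Property~(ii) follows from the classical boundary bumping theorem: every component of a proper closed subset of a continuum meets the relative boundary of that subset, and here this boundary is contained in $\partial D_{n_{j+1}}$. For property~(iii), I would take $z'_{n_{j+1}} = f^k(z_{n_j}) \in \Gamma_{n_{j+1}}$; then, by maximality of $k$, every $n \in \N$ satisfies $n + k \notin S$, so $f^n(z'_{n_{j+1}}) = f^{n+k}(z_{n_j}) \in D_{n_j+n+k} = D_{n_{j+1}+n}$.

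The conceptual obstacle is that the obvious candidate witness point $f^k(z'_{n_j})$ inherited from the induction hypothesis lies \emph{inside} $D_{n_j+k}$, so it cannot serve as $z'_{n_{j+1}} \in \Gamma_{n_{j+1}}$. The workaround is to promote $z_{n_j}$ itself to the witness role: hypothesis~(a) guarantees that its first iterate leaves $D_{n_j+1}$, while the boundedness of its orbit (because $z_{n_j} \in K(f)$) combined with hypothesis~(b) guarantees that all sufficiently late iterates return inside the $D_m$'s. This is precisely what makes the choice of $k$ as the last escape time both well-defined and tailored to give property~(iii) at the next step.
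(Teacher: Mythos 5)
Your proof is correct and takes essentially the same approach as the paper: your $k=\max S$ coincides exactly with the paper's choice of $N$ as the least integer with $f^n(z_{n_j})\in D_{n_j+n}$ for all $n>N$, and you promote $f^k(z_{n_j})$ to the new witness point and use boundary bumping to produce $z_{n_{j+1}}\in\partial D_{n_{j+1}}$, just as the paper does. The only cosmetic difference is that you invoke the boundary-bumping theorem in the form ``each component of a proper closed subset of a continuum meets its relative boundary,'' whereas the paper cites the equivalent formulation from Newman's book (Lemma~\ref{Newman}) about closures of components of the open complement; both are valid and interchangeable here.
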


The proof of Lemma~\ref{genctm} depends on the following result from plane topology; see \cite[page~84]{New}.

\begin{lemma}\label{Newman}
If $E_0$ is a continuum in $\hat{\C}$, $E_1$ is a closed subset of $E_0$ and $C$ is a component of $E_0\setminus E_1$, then $\overline{C}$ meets $E_1$.
\end{lemma}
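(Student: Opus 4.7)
The plan is a proof by contradiction. Suppose $\overline{C}\cap E_1=\emptyset$; I may assume $E_1\neq\emptyset$, since otherwise the statement is vacuous and is normally excluded from the hypotheses. Because $\overline{C}$ is connected and disjoint from $E_1$, it is a connected subset of $E_0\setminus E_1$ containing $C$, so the maximality of the component $C$ forces $\overline{C}=C$; in particular $C$ is closed in $E_0$.

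Next, I would use that $E_0$ is a compact Hausdorff space, hence normal, to separate the disjoint closed sets $C$ and $E_1$. This yields an open neighbourhood $U$ of $C$ in $E_0$ with $\overline{U}\cap E_1=\emptyset$. The subspace $\overline{U}$ is itself compact Hausdorff, and the same maximality argument used above shows that $C$ is actually a component of $\overline{U}$: any connected subset of $\overline{U}\subset E_0\setminus E_1$ meeting $C$ is a connected subset of $E_0\setminus E_1$ containing a point of $C$, hence already contained in $C$.

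The key step is the classical fact that in a compact Hausdorff space the components coincide with the quasi-components; equivalently, for each component $K$ and each open neighbourhood $V$ of $K$ there is a set that is relatively clopen and sandwiched between $K$ and $V$. Applied to $C$ as a component of $\overline{U}$ with neighbourhood $U$, this produces a set $K_0$ that is clopen in $\overline{U}$ and satisfies $C\subset K_0\subset U$. A short verification, using that $K_0$ is closed in the closed set $\overline{U}$, while $K_0$ is open in $\overline{U}$ and contained in the $E_0$-open set $U$, shows that $K_0$ is in fact clopen in $E_0$.

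Since $K_0\neq\emptyset$ and $E_0$ is connected, this forces $K_0=E_0$, which contradicts $K_0\subset U\subset E_0\setminus E_1$ together with $E_1\neq\emptyset$. The main obstacle is the invocation of the components-equal-quasi-components theorem, which is the only non-elementary topological fact needed; everything else is routine manipulation of open and closed sets in a normal space. (An alternative, equivalent route is to apply the ``boundary bumping'' form of this principle directly, but in either case the substantive input is the same.)
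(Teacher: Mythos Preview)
The paper does not prove this lemma; it is quoted directly from Newman's textbook with a page reference and no argument given. Your proof is correct and is essentially the standard route to this ``boundary bumping'' result: reduce by contradiction to producing a nontrivial clopen subset of the continuum $E_0$, and obtain that clopen set from the fact that components and quasi-components coincide in compact Hausdorff spaces. The verification in your step that $K_0$, clopen in $\overline{U}$, is in fact clopen in $E_0$ is the only place one has to be slightly careful, and your sketch handles it correctly.

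One minor wording issue: if $E_1=\emptyset$ the statement is not vacuous but actually false, since then $C=E_0$ is the unique component and $\overline{C}\cap E_1=\emptyset$. So $E_1\neq\emptyset$ is a genuine (implicit) hypothesis rather than a convenience; this is harmless here, as every application of the lemma in the paper has $E_1$ nonempty.
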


\begin{proof}[Proof of Lemma~\ref{genctm}]
Since $z_{n_j}\in \Gamma_{n_j}\subset K(f)$ and the domains $(D_n)$ satisfy condition~(b), there exists $N\in\N_0$ such that
\begin{equation}\label{kfc-cond1}
f^n(z_{n_j})\in D_{n_j+n},\quad\text{for } n>N.
\end{equation}
By property~(ii) and condition~(a),
\begin{equation}\label{bdry-cond1}
f(z_{n_j})\in \C\setminus D_{n_j+1},
\end{equation}
so the minimal integer~$N$ such that \eqref{kfc-cond1} holds is at least $1$. Define $n_{j+1}=n_j+N$, where~$N$ is this minimal integer. Then, by \eqref{kfc-cond1} and the minimality of $ N $,
\begin{equation}\label{kfc-cond2}
f^n(z_{n_j})\in D_{n_j+n},\quad\text{for } n>n_{j+1}-n_j,
\end{equation}
and
\[
f^{n_{j+1}-n_j}(z_{n_j})\in \C\setminus D_{n_{j+1}}.
\]
Moreover, $f^{n_{j+1}-n_j}(z_{n_j})\notin \partial D_{n_{j+1}}$, by condition~(a) and \eqref{kfc-cond2}, so
\begin{equation}\label{cond2}
f^{n_{j+1}-n_j}(z_{n_j})\in \C\setminus \overline{D}_{n_{j+1}}.
\end{equation}
Also, by property~(iii),
\begin{equation}\label{cond3}
f^{n_{j+1}-n_j}(z'_{n_j})\in D_{n_{j+1}}.
\end{equation}
It follows from~\eqref{cond2} and~\eqref{cond3} that the continuum $f^{n_{j+1}-n_j}(\Gamma_{n_j})$ includes points from both $D_{n_{j+1}}$ and $\C\setminus \overline{D}_{n_{j+1}}$ (see Figure \ref{LemFig}).

\begin{figure}[h]
    \centering
     \setlength\fboxsep{10pt}
     \setlength\fboxrule{0.5pt}
     \def\svgwidth{300pt}
     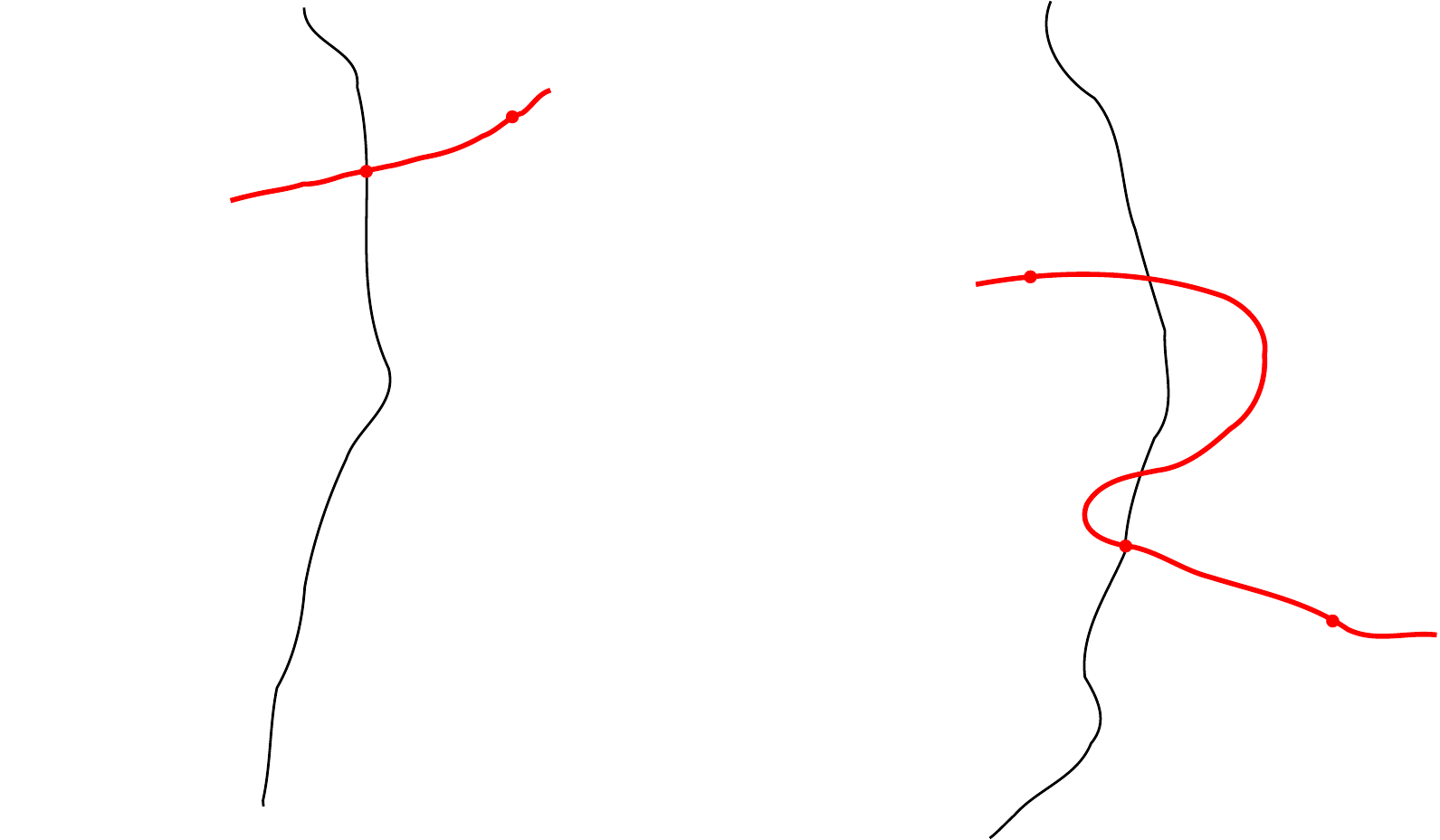
     \vspace{.2cm}
     \caption{Proof of Lemma~\ref{genctm}. }
      \label{LemFig}
\end{figure}

Now let $\Gamma_{n_{j+1}}$ be the component of the closed set
\[
f^{n_{j+1}-n_j}(\Gamma_{n_j})\cap (\C\setminus D_{n_{j+1}})
\]
that contains the point
\[
z'_{n_{j+1}}:=f^{n_{j+1}-n_j}(z_{n_j}).
\]
Then we deduce that $\Gamma_{n_{j+1}}$ meets $\partial D_{n_{j+1}}$ by applying Lemma~\ref{Newman} with
\[
E_0=f^{n_{j+1}-n_j}(\Gamma_{n_j})\cap (\C\setminus D_{n_{j+1}}) \quad \text{and} \quad E_1 = E_0\cap \partial D_{n_{j+1}}.
\]

Thus there exists $z_{n_{j+1}}\in \Gamma_{n_{j+1}}\cap\partial D_{n_{j+1}}$. Therefore, properties~(i) and~(ii) hold with $n_j$ replaced by $n_{j+1}$, and property~(iii) also holds, since
\[
f^n(z'_{n_{j+1}})=f^{n+n_{j+1}-n_j}(z_{n_j})\in D_{n_{j+1}+n},\quad\text{for } n\in\N,
\]
by \eqref{kfc-cond2}.
\end{proof}

\begin{remark*}
Note that property~(i) in Lemma~\ref{genctm} could be weakened to the property
\[
\Gamma_{n_j} \subset \{z: f^n(z) \in D_{n_j + n} \mbox{ for } n>N=N(z)\}\cap (\C \setminus D_{n_j}),
\]
since the only place in the proof where we use the fact that $\Gamma_{n_j} \subset K(f)$ is to deduce~\eqref{kfc-cond1} and it is clear that, if $\Gamma_{n_j}$ satisfies this weaker property, then any point $z \in \Gamma_{n_{j+1}} \subset f^{n_{j+1} - n_j}(\Gamma_{n_j})$ must satisfy property~(iii) with $n_j$ replaced by~$n_{j+1}$.
\end{remark*}

Next, we state two further topological lemmas that are needed for the proof of Theorem \ref{fullresult}. The first is a useful characterisation of a disconnected subset of the plane.

\begin{lemma}\cite[Lemma 3.1]{R11}
\label{rempe}
A subset $ S $ of $ \C $ is disconnected if and only if there exists a closed, connected set $ E \subset \C $ such that $ S \cap E = \emptyset $ and at least two different components of $ E^c $ intersect $ S. $
\end{lemma}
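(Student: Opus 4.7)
The plan is to prove the two directions separately. The ``if'' direction is routine; the ``only if'' direction is the substantive part and relies on a classical result from plane topology.

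For the easier direction, suppose such an $E$ exists, and let $U_1,U_2$ be two distinct components of $E^c$ each meeting $S$. Since $E$ is closed and $\C$ is locally connected, $E^c$ is open and each of its components is open. Setting $V=U_1$ and $W$ equal to the union of the remaining components of $E^c$ gives disjoint open subsets of $\C$ whose union contains $S$ (because $S\cap E=\emptyset$), each meeting $S$. Hence $S=(S\cap V)\sqcup(S\cap W)$ is a disconnection of $S$.

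For the other direction, suppose $S$ is disconnected and write $S=A\sqcup B$ with $A,B$ nonempty, disjoint, and both relatively closed in $S$. Being closed in $S$ forces $\overline{A}\cap B=A\cap\overline{B}=\emptyset$ (closures in $\C$), so $A$ and $B$ are separated subsets of the metric space $\C$. I would then construct disjoint open neighbourhoods $V_A\supset A$ and $V_B\supset B$ in $\C$, for instance via the continuous function $z\mapsto d(z,A)-d(z,B)$, and set $K=\C\setminus(V_A\cup V_B)$. Then $K$ is closed in $\C$, disjoint from $S$, and separates each $a\in A$ from each $b\in B$ in $\C$, since such $a,b$ lie in the disjoint open sets $V_A,V_B$ that make up the complement of $K$. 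The crucial step is to extract from $K$ a \emph{connected} separator $E$. For this I would pass to the Riemann sphere $\hat\C$ and invoke the classical theorem that, in a locally connected continuum, if a closed set separates two points then so does at least one of its components (this can be proved from Lemma~\ref{Newman}, for example). Applied to $\hat\C$, the closed set $K\cup\{\infty\}$, and the chosen points $a,b$, this yields a continuum $E$ in $K\cup\{\infty\}$ which has $a$ and $b$ in distinct components of its complement in $\hat\C$, and $E$ is disjoint from $S$ because $K$ is and $\infty\notin S$.

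The main obstacle I anticipate is ensuring that $E$ is actually contained in $\C$ rather than passing through $\infty$. This can be arranged by choosing $V_A,V_B$ with a little care---for example, by taking the neighbourhood of whichever of $A,B$ is bounded to be itself bounded, so that $\infty$ is an interior point of $\hat\C\setminus(V_A\cup V_B)$ and hence cannot lie on a separating component---or, when both $A$ and $B$ are unbounded, by a preliminary M\"obius transformation bringing $\infty$ into the interior of $V_A$ (or $V_B$), thereby reducing to the bounded case.
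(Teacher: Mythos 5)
The paper does not give its own proof of this lemma: it is quoted verbatim from Rempe's paper \cite[Lemma 3.1]{R11}, so there is no in-paper argument to compare against. Judged on its own merits, your ``if'' direction is correct. The ``only if'' direction has the right general shape (build a closed set $K$ disjoint from $S$ separating $a\in A$ from $b\in B$, pass to $\hat\C$, and invoke the classical theorem that a closed separating subset of $S^2$ has a connected separating component), but the way you propose to keep $\infty$ out of the separating component does not hold up. Taking $V_A$ bounded when $A$ is bounded does \emph{not} make $\infty$ interior to $\hat\C\setminus(V_A\cup V_B)$ --- for that you would need $V_A\cup V_B$ itself to be bounded, i.e.\ both $V_A$ \emph{and} $V_B$ bounded. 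The correct fix in the ``one set bounded'' case is instead to take $V_B$ to contain a neighbourhood of $\infty$ (e.g.\ $V_B=\C\setminus\overline{V_A}$), so that $K$ is compact and $\{\infty\}$ is an isolated, hence non-separating, component of $K\cup\{\infty\}$.

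The more serious gap is the case where $A$ and $B$ are both unbounded. You suggest a M\"obius change of coordinates to reduce to the bounded case, but this is not always possible. Consider $A=\{0<\operatorname{Re}z<1\}$ and $B=\{\operatorname{Re}z<0\}\cup\{\operatorname{Re}z>1\}$, which are separated. Here $\C\setminus S=\{\operatorname{Re}z=0\}\cup\{\operatorname{Re}z=1\}=\overline{A}\cap\overline{B}$, so \emph{every} point of $\C$ not in $S$ is a limit point of both $A$ and $B$, and there is no admissible pole for a M\"obius map that would make either image bounded while keeping both images in $\C$. With the natural choice $V_A=A$, $V_B=B$ one gets $K=\{\operatorname{Re}z=0\}\cup\{\operatorname{Re}z=1\}$; then $K\cup\{\infty\}$ is connected, its only component contains $\infty$, and deleting $\infty$ disconnects it, so your argument stalls exactly at the point you flagged. (The lemma itself is still true here: one of the two lines is a closed connected set with two complementary components meeting $S$.) To close the gap you would need a further topological argument showing that in this situation some component of $K$ itself separates $a$ from $b$ in $\C$; that is a genuine extra step, not a routine reduction, and the appeal to ``this can be proved from Lemma~\ref{Newman}'' does not supply it. Also note, as a minor point, that even if $\infty$ were interior to $K\cup\{\infty\}$ this would not by itself imply that $\infty$ lies on no separating component --- what it does give (after a short extra argument) is that removing $\infty$ from such a component leaves it connected, which is what you actually need.
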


We also need the following generalisation of \cite[Lemma 1]{RS09}, given in \cite{Sixsmithmax}.  This result will be used again later in the paper.
\begin{lemma}
\label{RSlemm}
Let $(E_j)_{j \in \N_0}$ be a sequence of compact sets in $ \C $, $(m_j)_{j \in \N_0}$ be a sequence of positive integers and $f$ be a {\tef} such that $E_{j+1} \subset f^{m_j}(E_j )$, for $j \in \N_0$. Set $p_k = \sum_{j=0}^k m_j$, for $k \in \N_0$. Then there exists $\zeta\in E_0$ such that
\begin{equation*}
\label{feq}
f^{p_k}(\zeta) \in E_{k+1}, \qfor k \in \N_0.
\end{equation*}
\end{lemma}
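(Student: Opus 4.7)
The plan is to prove the existence of $\zeta$ by a standard nested-intersection argument. Define, for each $k \in \N_0$, the set
\[
S_k := \{z \in E_0 : f^{p_j}(z) \in E_{j+1} \text{ for } j = 0, 1, \ldots, k\} = E_0 \cap \bigcap_{j=0}^{k} f^{-p_j}(E_{j+1}).
\]
Since $f$ is continuous and each $E_{j+1}$ is compact (hence closed), each preimage $f^{-p_j}(E_{j+1})$ is closed in~$\C$. Thus $S_k$ is a closed subset of the compact set $E_0$, and so $S_k$ is compact. Moreover, $S_0 \supset S_1 \supset S_2 \supset \cdots$, since increasing~$k$ only imposes more conditions.

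Once I know each $S_k$ is nonempty, the conclusion is immediate: by the finite intersection property for nested nonempty compact sets, $\bigcap_{k \in \N_0} S_k \neq \emptyset$, and any $\zeta$ in the intersection satisfies $f^{p_k}(\zeta) \in E_{k+1}$ for every $k \in \N_0$, as required.

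The real content is therefore showing $S_k \neq \emptyset$ for each fixed $k \in \N_0$, and I would do this by working backwards through the chain, using the hypothesis $E_{j+1} \subset f^{m_j}(E_j)$. Fix $k$ and set $W_{k+1} := E_{k+1}$, which is nonempty (assuming each $E_j$ nonempty; otherwise the conclusion is vacuous or trivial after discarding empty stages). Inductively define, for $j = k, k-1, \ldots, 0$,
\[
W_j := E_j \cap f^{-m_j}(W_{j+1}).
\]
Assuming $W_{j+1} \neq \emptyset$, I have $W_{j+1} \subset E_{j+1} \subset f^{m_j}(E_j)$ by hypothesis, so every point of $W_{j+1}$ has at least one preimage under $f^{m_j}$ lying in $E_j$; hence $W_j \neq \emptyset$. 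Continuing down to $j=0$ gives a nonempty set $W_0 \subset E_0$. Any $\zeta \in W_0$ satisfies $f^{m_0}(\zeta) \in W_1$, then $f^{m_0 + m_1}(\zeta) = f^{m_1}(f^{m_0}(\zeta)) \in W_2$, and so on by induction, so $f^{p_j}(\zeta) \in W_{j+1} \subset E_{j+1}$ for $0 \leq j \leq k$. Thus $\zeta \in S_k$ and $S_k \neq \emptyset$.

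I do not expect any genuine obstacle here. The one point requiring mild care is the backwards chain: each inductive step uses only that $W_{j+1}$ is a nonempty subset of $E_{j+1}$ together with the containment $E_{j+1} \subset f^{m_j}(E_j)$, so no surjectivity or openness of $f$ is needed; the entireness of $f$ enters solely through continuity, which guarantees that the sets $f^{-p_j}(E_{j+1})$ are closed and hence the $S_k$ are compact, enabling the nested intersection argument to deliver a common point.
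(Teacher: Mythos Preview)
Your argument is correct: the backwards construction of the nonempty sets $W_j$ shows each $S_k$ is nonempty, and compactness of $E_0$ together with continuity of~$f$ then gives a common point via nested intersection. The paper itself does not prove this lemma but cites it from \cite{Sixsmithmax} (as a generalisation of \cite[Lemma~1]{RS09}); your proof is precisely the standard argument used in those references.
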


We now give the proof of Theorem \ref{fullresult}.

\begin{proof}[Proof of Theorem \ref{fullresult}]
Suppose that $ \kfc $ is disconnected. Then, by Lemma~\ref{rempe}, there is a closed, connected set $ E \subset K(f) $ such that two distinct components of $ E^c $, say $ G_1 $ and $ G_2 $, each meet $ \kfc $.  Evidently the boundaries of $ G_1 $ and $ G_2 $ are connected and are contained in $ K(f). $ By Theorem~\ref{bounddomkfc}(a), we deduce that $G_1$ and $G_2$ are both unbounded, so $\partial G_1$ and $\partial G_2$ are unbounded, as are their images under the iterates of~$f$, by conditions~(a) and~(b).

We now show that there exists $n_0\in \N$ and a continuum $ \Gamma_{n_0} $ that satisfies properties (i),~(ii) and~(iii) in Lemma~\ref{genctm},  with $ j = 0 $.

Without loss of generality we can assume, by condition~(b), that for some point  $\alpha \in \partial G_1$  every domain $D_n$, $n\in\N_0$, contains the entire orbit  of~$\alpha$.  By Lemma~\ref{Newman}, there exists a continuum $\Gamma \subset \partial G_1$ such that  $\alpha \in \Gamma$  and $\Gamma \cap \partial D_0 \ne \emptyset$.

Now take $z_0\in \Gamma \cap \partial D_0$ and choose $N\in \N$ such that $ f^n(z_0)\in D_N $, for $ n\in \N_0. $  Since $f(z_0) \in \C \setminus D_1$, by condition~(a), it follows that the maximal value $n_0$ of~$n$ such that $f^n(z_0) \in \C \setminus D_n$ satisfies $1\le n_0 <N$. Note that  $ z'_{n_0} := f^{n_0}(z_0) $ lies outside $ \overline{D}_{n_0} $, so $ f^{n_0}(\Gamma)\cap \partial D_{n_0} \ne \emptyset$.

 Now let $\Gamma_{n_0}$ be  the component of $f^{n_0}(\Gamma)\setminus D_{n_0}$ that contains  $z'_{n_0}$. Then $\Gamma_{n_0}$ meets $\partial D_{n_0}$, by Lemma~\ref{Newman} again. It follows that the continuum $\Gamma_{n_0}$ satisfies

\begin{enumerate}[(i)]
\item $ \Gamma_{n_0} \subset K(f) \cap (\C\setminus D_{n_0})$;
\item there is a point $ z_{n_0} \in \Gamma_{n_0} \cap \partial D_{n_0}$;
\item there is a point $ z'_{n_0} \in \Gamma_{n_0} $ such that $ f^n(z'_{n_0}) \in D_{n_0+n} $ for all $ n \in \N$.

\end{enumerate}
Thus by Lemma~\ref{genctm} there is   a strictly increasing sequence $ (n_j)_{j \in \N_0} $ and a sequence of continua $(\Gamma_{n_j})_{j\in \N_0}$ such that, for each $j\in \N_0$,
\begin{enumerate}[(i)]
\item $ \Gamma_{n_j} \subset K(f) \cap (\C\setminus D_{n_j})$;
\item there is a point $ z_{n_j} \in \Gamma_{n_j} \cap \partial D_{n_j}; $
\item there is a point $ z'_{n_j} \in \Gamma_{n_j} $ such that  $ f^n(z'_{n_j}) \in D_{n_j+n} $   for all $ n \in \N; $
\item $f^{n_{j+1}-n_j}(\Gamma_{n_j})\supset \Gamma_{n_{j+1}}$.
\end{enumerate}
We now apply Lemma~\ref{RSlemm} with
\[
E_j=\Gamma_{n_j}\quad \text{and} \quad m_j=n_{j+1}-n_j,\qfor j\in\N_0.
\]
Then, by property~(iv),
\[
f^{m_j}(E_j)\supset E_{j+1},\qfor j\in \N_0.
\]
We deduce that there exists $\zeta\in E_0=\Gamma_{n_0}$ such that
\[
f^{p_k}(\zeta)\in E_{k+1}, \qfor k\in \N_0,
\]
where $p_k= m_0+\cdots +m_k=n_{k+1}-n_0$; that is,
\[
f^{n_{k+1}-n_0}(\zeta)\in \Gamma_{n_{k+1}}, \qfor k\in \N_0.
\]
Thus by   condition~(b) in the statement of the theorem   and property~(i) of the sequence of continua $(\Gamma_{n_j})$,
\[
f^{n_{k+1}-n_0}(\zeta)\to\infty \;\text{ as } k\to \infty,
\]
so $\zeta\in \kfc$, which contradicts the fact that $\zeta \in \Gamma_{n_0}\subset K(f)$. This completes the proof of Theorem~\ref{fullresult}.
\end{proof}
\begin{remark*}
This proof shows that, under the hypotheses of Theorem~\ref{fullresult}, if $K$ is any closed connected set in $K(f)$ and  $\alpha \in K$, then there is a positive constant  $C(K,\alpha)$  such that  $K\subset \{z:|z|\le C(K,\alpha)\}$.    It follows that, under the hypotheses of Theorem~\ref{fullresult}, any Fatou component of~$f$ contained in $K(f)$ is bounded.

If~$f$ is \spl, then this conclusion about Fatou components is already known and moreover such Fatou components cannot be wandering domains \cite[Theorem~1.4]{O12a}. However, under this hypothesis~$K(f)$ may contain an unbounded closed connected set; see Example~5.2.
\end{remark*}

\section{Components of $ \kfc $}
\label{components}
\setcounter{equation}{0}

In this section we prove Theorem \ref{uncount} and a number of other results about the components of $\kfc$. 

We begin by proving the following,   which parallels the result for $ I(f) $ in \cite[Theorem 1.2]{RS14}.    Several of the results in this section follow from this result with appropriate choices of the set $ E $.

\begin{theorem}\label{kfccomp}
Let $f$ be a {\tef} and let $E$ be a set such that $E \subset \kfc$ and $J(f) \subset \overline{E}$. Then either $\kfc$ is connected or it has infinitely many components that meet $E$.
\end{theorem}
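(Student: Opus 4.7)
The plan is to argue by contradiction, adapting the proof of \cite[Theorem~1.2]{RS14} (which handles the analogous statement for $I(f)$) to the superset $\kfc$. Suppose $\kfc$ is disconnected and that $C_1, \dots, C_k$ are the only components of $\kfc$ meeting $E$. Setting $E_i = E \cap C_i$, the hypotheses give
\[
J(f) \;\subset\; \overline{E} \;\subset\; \overline{C_1} \cup \cdots \cup \overline{C_k}.
\]

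First I will apply Lemma~\ref{rempe} to the disconnected subset $C_1 \cup C_2$ of $\kfc$ to obtain a closed connected set $F \subset K(f) = \C \setminus \kfc$ with two components $G_1, G_2$ of $F^c$ such that $C_1 \subset G_1$ and $C_2 \subset G_2$. Exactly as in the proof of Theorem~\ref{fullresult}, Theorem~\ref{bounddomkfc}(a) forces both $G_1$ and $G_2$ to be unbounded. Since each $C_j$ is connected and disjoint from $F$, it lies entirely in one component of $F^c$, so the remaining indices $\{3, \dots, k\}$ partition according to which complementary component contains the corresponding $C_j$.

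The heart of the argument will be to show that $G_1$ must contain a component of $\kfc$ meeting $E$ that is not one of the $C_j$'s. The idea is that $\partial C_1 \cap G_1 \subset J(f) \subset \overline{E}$ is nonempty and accumulates $E$-points from inside $G_1$. Those $E$-points lie in $\bigcup_i C_i$; the ones inside $C_1$ are accounted for, but if eventually all $E$-points in a neighbourhood of some $\zeta \in \partial C_1 \cap G_1$ lie in $C_1$, then combining this with $J(f) \cap G_1 \subset \overline{C_1}$ and applying Theorem~\ref{bounddomkfc}(a) to suitable bounded simply connected subdomains of $G_1 \setminus \overline{C_1}$, I aim to force $\kfc \cap G_1 = C_1$ and then reach a contradiction using Theorem~\ref{Qprops} (in particular, the fact that $\overline{\kfc}$ has no bounded components). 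Otherwise, $E$-points accumulating on $\partial C_1$ from within $G_1$ include points outside $C_1$, which must lie in some $C_j$ with $j \neq 1$ and $C_j \subset G_1$; iterating the separation argument, by now splitting off this $C_j$ from $C_1$ with another closed connected set in $K(f)$, yields after $k$ steps a $(k+1)$-th component of $\kfc$ meeting $E$, contradicting finiteness.

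The main obstacle I anticipate is the case in which one of the Fatou components of $\kfc$ inside $G_1$ is a non-univalent Baker domain $U$: by Lemma~\ref{nonunibak} we know that $\partial U$ is disconnected, but as flagged in the remark after Lemma~\ref{kfcjf} it is not known whether $\partial U \cap \kfc \neq \emptyset$, so the naive accumulation argument above may fail to place any new $E$-points on $\partial U$. To handle this I would use the disjoint pieces of $\partial U$ supplied by Lemma~\ref{nonunibak} to refine the separator $F$ within $K(f)$, iterating Lemma~\ref{rempe} inside $G_1$ until a further component of $\kfc$ meeting $E$ is isolated and the contradiction is reached.
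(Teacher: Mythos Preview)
There is a concrete gap at the very first step. When you apply Lemma~\ref{rempe} to the set $C_1\cup C_2$, the lemma only produces a closed connected set $F$ with $F\cap(C_1\cup C_2)=\emptyset$; it does \emph{not} give $F\subset K(f)$, since $F$ may well pass through other components of $\kfc$. Your subsequent appeal to Theorem~\ref{bounddomkfc}(a) (``exactly as in the proof of Theorem~\ref{fullresult}'') to force $G_1$ and $G_2$ to be unbounded requires $\partial G_i\subset F\subset K(f)$, so that step collapses. If instead you apply Lemma~\ref{rempe} to $\kfc$ itself you do get a separator in $K(f)$, but then you have no control over which $C_j$ lies in which complementary component, and the inductive ``split off one more $C_j$'' scheme has nothing to start from. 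Beyond this, the later parts of the sketch---forcing $\kfc\cap G_1=C_1$ via Theorem~\ref{bounddomkfc}(a) on unspecified ``suitable subdomains of $G_1\setminus\overline{C_1}$'', and ``refining the separator within $K(f)$'' in the non-univalent Baker domain case---are plans rather than arguments, and it is not clear how any of them actually exhibits an $E$-point in a \emph{new} component of $\kfc$.

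The paper's proof takes a completely different route and does not use Lemma~\ref{rempe} at all; the engine is the blowing-up property (Lemma~\ref{blowup}). One first proves a general lemma: if $E\subset F$ with $F$ completely invariant and $J(f)\subset\overline{E}$, and $E$ meets only finitely many components $F_1,\dots,F_m$ of $F$, then $F\cap J(f)$ lies in a single $F_j$. The argument is short: a limit argument gives $F\cap J(f)\subset F_1\cup\cdots\cup F_m$; if the minimal such $m$ is at least~$2$, choose a small disc $D$ about a point of $F_1\cap J(f)$ disjoint from $F_2,\dots,F_m$, blow it up so that $f^N(D)$ meets both $F_1\cap J(f)$ and some $F_j\cap J(f)$ with $j\ge 2$; invariance then places both preimages in $F_1$, so $f^N(F_1)$ is a connected subset of $F$ meeting both $F_1$ and $F_j$, a contradiction. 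Taking $F=\kfc$, this reduces the theorem to: if $\kfc\cap J(f)$ lies in a single component $C_1$, then $\kfc$ is connected. Any other component of $\kfc$ would miss $J(f)$, hence by Lemma~\ref{kfcjf} be a non-univalent Baker domain $U$ (or a preimage of one); but $\overline{\kfc\cap J(f)}=J(f)$ by Theorem~\ref{Qprops}, so $C_1$ must meet at least two complementary components of $U$ and therefore meet $\partial U$, contradicting $\partial U\cap\kfc=\emptyset$. This handles the Baker-domain obstruction in one line and avoids the separation gymnastics entirely.
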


 The proof of Theorem \ref{kfccomp} uses  the well known \emph{blowing up property} of the Julia set,   stated as the next lemma.   Here $ E(f) $ is the \textit{exceptional set} of $ f $, that is, the set of points with a finite backwards orbit under $ f $ (which for a \tef\ contains at most one point).

\begin{lemma}
\label{blowup}
Let $ f $ be an entire function, let $ K $ be a compact set such that $ K \subset \C \setminus E(f) $ and let $ G $ be an open \nhd\ of $ z \in J(f) $. Then there exists $ N \in \N $ such that $ f^n(G) \supset K $, for all $ n \geq N. $
\end{lemma}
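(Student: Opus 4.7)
The plan is to use the classical approach combining Montel's theorem on normal families with the expanding property at a repelling periodic point, arguing by contradiction. Assume the conclusion fails, so that there exist $n_j\to\infty$ and points $p_j\in K\setminus f^{n_j}(G)$; by compactness of~$K$, after passing to a subsequence, $p_j\to p_\infty\in K\subset\C\setminus E(f)$. Since repelling periodic points of~$f$ are dense in $J(f)$ (Fatou for polynomials; Baker \cite{Bak68} for \tef s) and~$G$ is an open neighbourhood of a point of~$J(f)$, I would pick a repelling periodic point $\zeta\in G\cap J(f)$ with $\zeta\notin E(f)$, of period~$q$ and multiplier $\lambda$ satisfying $|\lambda|>1$; its finite forward orbit I denote by~$O$.

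The key step is to produce a locally uniformly convergent subsequence of $\{f^{n_j}|_G\}$ near~$\zeta$, so as to exploit the expansion at~$\zeta$. To this end I would form the M\"obius-transformed family
\[
h_j(w)=\frac{f^{n_j}(w)-p_j}{f^{n_j}(w)-q_0},
\]
where $q_0\in\C$ is chosen outside $O\cup\{p_\infty\}$. Each $h_j$ avoids the value~$0$ (by the standing assumption $p_j\notin f^{n_j}(G)$) and, for $j$ large, the value~$1$ (since $p_j\to p_\infty\neq q_0$); if additionally $\infty$ is avoided then Montel's theorem yields a subsequence $h_{j_k}\to H$ locally uniformly on $G$, meromorphic or $\equiv\infty$, and inverting the M\"obius transformation gives a locally uniformly convergent subsequence $f^{n_{j_k}}\to F$ near~$\zeta$.

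To close the argument, I would observe that $f^{n_{j_k}}(\zeta)$ lies in the finite bounded set~$O$, so passing to a further subsequence the limit $F(\zeta)\in O$ is finite; since $F$ is meromorphic with a finite value at~$\zeta$, it is analytic there and $F'(\zeta)$ is finite. On the other hand, the chain rule along the periodic orbit yields $|(f^{n_{j_k}})'(\zeta)|\to\infty$ as $n_{j_k}\to\infty$ (using $(f^{qm})'(\zeta)=\lambda^m$ and $|\lambda|>1$), whereas locally uniform convergence forces $(f^{n_{j_k}})'(\zeta)\to F'(\zeta)\in\C$. This contradiction would complete the plan; the limiting case $H\equiv\infty$ is ruled out similarly, since it would force $f^{n_{j_k}}(\zeta)\to q_0$, contradicting $f^{n_{j_k}}(\zeta)\in O$ and $q_0\notin O$.

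The main technical obstacle lies in securing the third omitted value~$\infty$ for the family~$\{h_j\}$. When $E(f)\ne\emptyset$ I would take $q_0$ to be the unique exceptional value and shrink $G$ slightly about $\{z,\zeta\}$ so as to miss $q_0$; since the full backward orbit of~$q_0$ is contained in $\{q_0\}$, the shrunken $G$ then avoids every preimage of $q_0$ under every iterate of~$f$, so $h_j$ omits~$\infty$. When $E(f)=\emptyset$, no such $q_0$ is available, and I expect to need an alternative criterion—either a Schottky-type bound exploiting the boundedness of $\{h_j(\zeta)\}$ together with the two omitted values $0$ and~$1$, or a variant auxiliary family whose third omitted value is forced by the geometry of~$G$.
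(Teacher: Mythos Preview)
The paper does not prove this lemma; it is introduced as ``the well known \emph{blowing up property} of the Julia set'' and quoted without argument, the standard references being \cite{aB,wB93,Mil}. So there is no in-paper proof to compare against, only the textbook one.

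Your contradiction scheme has a real gap in the case $E(f)=\emptyset$, and neither of your suggested fixes closes it. Schottky's theorem requires $h_j$ to be \emph{holomorphic} on a neighbourhood of~$\zeta$; but $h_j$ has poles wherever $f^{n_j}=q_0$, so to apply Schottky near~$\zeta$ you would need a fixed disc $V\ni\zeta$ with $q_0\notin f^{n_j}(V)$ for all large~$j$ --- exactly the third omitted value you could not secure, now demanded on the smaller domain~$V$. Thus the Schottky route circles back to the same obstruction, and the ``variant auxiliary family'' is left unspecified. The underlying difficulty is that each $f^{n_j}$ is known to omit only $\infty$ and a single \emph{moving} finite point~$p_j$, which is one value short of what Montel needs. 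The textbook argument bypasses this entirely by working constructively rather than by contradiction: take a disc $V$ about a repelling $q$-periodic point $\zeta\in G$ with $\overline V\subset f^q(V)\cap G$; then for each residue $r\in\{0,\dots,q-1\}$ the open sets $f^{kq+r}(V)$ increase with~$k$, their union omits at most one finite point (whose backward orbit is then finite, so the point lies in $E(f)$), and compactness of~$K$ against these increasing open covers yields the required~$N$. Here Montel is applied once, in the forward direction, with no need to manufacture extra omitted values.
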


 The proof of Theorem \ref{kfccomp} also uses the following result.   This was proved in the special case that $F = I(f)$ in \cite[Theorem 5.1(a)]{RS11}.

\begin{lemma}\label{EF}
Let~$f$ be a {\tef}, and let~$E$ and $F$ be sets such that $E\subset F$, $F$ is backwards invariant, and $J(f) \subset \overline{E}$. If~$E$ meets only finitely many components of~$F$, then $F \cap J(f)$ lies in a single component of $F$.
\end{lemma}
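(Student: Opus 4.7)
The plan is a two-stage argument. Let $C_1,\ldots,C_k$ be the components of $F$ that meet $E$, so that $E\subset C_1\cup\cdots\cup C_k$.

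\emph{Stage 1: show $F\cap J(f)\subset\bigcup_i C_i$.} Taking closures in the above inclusion gives $\overline{E}\subset\bigcup_i\overline{C_i}$ (a finite union), and since $J(f)\subset\overline{E}$ by hypothesis, also $J(f)\subset\bigcup_i\overline{C_i}$. Because every component of $F$ is closed in $F$, one has $\overline{C_i}\cap F=C_i$; hence any $z\in F\cap J(f)$ lies in some $C_i$.

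\emph{Stage 2: show at most one $C_i$ meets $J(f)$.} I would first prove a localization sub-lemma: for every $z\in C_i\cap J(f)$ there is an open disc $D$ about $z$ with $E\cap D\subset C_i$, and hence (by repeating the closure argument inside $D$) also $F\cap J(f)\cap D\subset C_i$. The sub-lemma is proved by pigeonhole: otherwise a sequence $(e_m)\subset E\setminus C_i$ tends to $z$, and by finiteness of the family some $C_j$ with $j\neq i$ contains infinitely many $e_m$, forcing $z\in\overline{C_j}\cap F=C_j$ and contradicting the disjointness of components.

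Suppose for contradiction that $C_1$ and $C_2$ each meet $J(f)$; pick $z_1\in C_1\cap J(f)$, $z_2\in C_2\cap J(f)$, and corresponding discs $D_1,D_2$ from the sub-lemma. Since $J(f)$ is perfect and the exceptional set $E(f)$ has at most one element, by moving $z_2$ slightly within $C_2\cap J(f)$ we may assume $z_2\notin E(f)$. Applying the blowing-up property (Lemma~\ref{blowup}) to $z_1$ with compact target $K=\{z_2\}$ yields some $n\in\N$ and a point $w\in D_1$ with $f^n(w)=z_2$. Backward invariance of both $F$ and $J(f)$ places $w\in F\cap J(f)\cap D_1$, whence $w\in C_1$ by the sub-lemma; thus $w\in C_1$ while $f^n(w)=z_2\in C_2$.

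The main obstacle is turning this last configuration into a contradiction with $C_1\neq C_2$. A clean approach uses that $f^n(C_1)$ is a connected set containing $z_2\in C_2$: were $f^n(C_1)\subset F$, the maximality of the component $C_2$ would force $f^n(C_1)\subset C_2$, and a symmetric blowing-up from $z_2$ into $D_2$ producing $w'\in C_2$ with $f^m(w')=z_1$ would give $f^m(C_2)\subset C_1$, whence $f^{n+m}(C_1)\subset C_1$ and, after standard iteration, $C_1=C_2$. Since only backward invariance of $F$ is assumed, the inclusion $f^n(C_1)\subset F$ is not automatic; the plan is therefore to iterate the blowing-up inside $D_1$, at each step using the sub-lemma to confine the newly produced preimages to $C_1$, and then to exploit the finiteness of the list $C_1,\ldots,C_k$ to extract a pigeonhole-style contradiction between the forward and backward orbit structure attached to $z_1$ and $z_2$.
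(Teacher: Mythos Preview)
Your Stage~1 and the localization sub-lemma in Stage~2 are exactly the paper's approach: the paper passes to a minimal list $F_1,\ldots,F_m$ of components containing $F\cap J(f)$, then chooses a disc $D$ about $z_1\in F_1\cap J(f)$ with $D\cap(F_2\cup\cdots\cup F_m)=\emptyset$, which is your sub-lemma.

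The paper's finish is slightly different and shorter than yours. Instead of hitting a single target $z_2$, it uses the blowing-up property to find a single $N$ with $f^N(D)$ covering non-exceptional points of \emph{both} $F_1\cap J(f)$ and $F_j\cap J(f)$, giving $w_1,w_j\in D$ with $f^N(w_1)\in F_1\cap J(f)$ and $f^N(w_j)\in F_j\cap J(f)$. Backward invariance of $F\cap J(f)$ together with the choice of $D$ forces $w_1,w_j\in F_1$. The paper then ends in one line: ``$f^N(F_1)$ is a connected subset of $F$ that meets both $F_1$ and $F_j$, which is a contradiction.''

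You have correctly spotted that this last sentence uses $f^N(F_1)\subset F$, i.e.\ \emph{forward} invariance, which is not among the stated hypotheses; the paper simply asserts it. In every application in the paper one has $F=\kfc$, which is completely invariant, so nothing downstream is affected; the natural reading is that ``backwards invariant'' in the lemma should really be ``completely invariant.'' Your caution here is well placed, but your proposed workaround---iterated blowing-up inside $D_1$ plus an unspecified pigeonhole---is not a proof: nothing in what you wrote produces a contradiction from backward invariance alone, and the vague ``forward and backward orbit structure'' sketch does not close the gap. If you want a proof matching the lemma exactly as stated, you must either exhibit that argument explicitly or, more realistically, strengthen the hypothesis to complete invariance and then use the paper's one-line finish.
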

\begin{proof}
Suppose that $E$ is contained in the union of finitely many
components of $F$, say $F_1,F_2,\ldots,F_m$. Take any $z\in
F\cap J(f)$. Since $J(f)\subset\overline{E}$, there exist
$z_n\in E$ such that $z_n\to z$ as $n\to\infty$. Without loss of
generality all terms of this sequence $(z_n)$ lie in a single
component, $F_j$ say. Since $z\in F$, we have $z\in F_j$.
Hence
\begin{equation}\label{eqn5.1}
F\cap J(f)\subset F_1\cup F_2\cup \cdots\cup F_m.
\end{equation}
We now assume that $F_1,F_2,\ldots,F_m$ is the minimal set of
components of $F$ such that (\ref{eqn5.1}) holds. Then $F_j\cap
J(f)\ne\emptyset$, for $j=1,2,\ldots,m$. Note that if the exceptional set $E(f)$ is non-empty, then
\begin{equation}\label{eqn5.1a}
(F_j\setminus E(f))\cap J(f)\ne \emptyset,\quad\text{for } j=1,2,\ldots, m.
\end{equation}
Indeed, if $E(f)=\{\alpha\}\subset F_j\cap J(f)$, then it follows from Lemma~\ref{blowup} that $\alpha$ is a limit point of the backwards orbit of
any non-exceptional point in $F\cap J(f)$ and hence $\alpha$ is the limit of a sequence in $F_i\cap J(f)$, say, by (\ref{eqn5.1}). Thus $i=j$ and so (\ref{eqn5.1a}) holds.

If $m=1$, then $F\cap
J(f)$ is contained in one component of $F$, as required.
If $m>1$, then we can take $z_1\in F_1\cap J(f)$ and an open
disc $D$ centred at~$z_1$ so small that
\begin{equation}\label{eq5.2}
D\cap(F_2\cup\cdots \cup F_m)=\emptyset.
\end{equation}
Consider $F_j$, $j\ge 2$. Then
there exists $N\in\N$ such that $f^N(D)$ meets both $F_1\cap
J(f)$ and $F_j\cap J(f)$, by (\ref{eqn5.1a}) and Lemma~\ref{blowup}. Hence there
exist $w_1,w_j\in D$ such that
\[
f^N(w_1)\in F_1\cap J(f)\quad\text{and}\quad f^N(w_j)\in F_j\cap J(f),
\]
so $w_1,w_j\in F_1$ by the backwards invariance of $F\cap
J(f)$ and (\ref{eq5.2}). Thus $f^N(F_1)$ is a connected subset of
$F$ that meets both $F_1$ and $F_j$, which is a contradiction. This completes the proof.
\end{proof}

  We now deduce Theorem~\ref{kfccomp} from Lemma~\ref{EF}.

\begin{proof}[Proof of Theorem~\ref{kfccomp}]
By Lemma~\ref{EF} with $F = \kfc$ we deduce that, in order to prove Theorem~\ref{kfccomp}, it is sufficient to show that, if $\kfc \cap J(f)$ lies in a single component of $\kfc$, say $C_1$, then $\kfc$ must be connected.

It follows from Lemma~\ref{kfcjf} that, if there is a component of $\kfc$ that does not meet $J(f)$, then it must be a Baker domain with a disconnected boundary, or a preimage of such a Baker domain. Suppose then that~$U$ is such a component. Then $U$ has more than one complementary component, each of which is closed and unbounded, and meets $J(f)$.

All the points of $\kfc\cap J(f)$ lie in these complementary components of $U$, and $\kfc\cap J(f)$ cannot be contained in a single complementary component of~$U$ because $J(f)=\overline{\kfc\cap J(f)}$, by Theorem~\ref{Qprops}. Hence the component~$C_1$ of $\kfc$   that   contains $\kfc \cap J(f)$ must meet at least two complementary components of $U$ and so it must meet the boundaries of these two complementary components, which are subsets of $\partial U$. This contradicts the fact that $\partial U\cap \kfc=\emptyset$. Hence such a component $U$ of $\kfc$ cannot exist. Thus any component of $\kfc$ must meet $J(f)$ and hence must lie in $C_1$; that is, $\kfc$ is connected. This completes the proof.
\end{proof}

We now show that several connectedness properties of $\kfc$ follow easily from Theorem~\ref{kfccomp}. First,   noting Eremenko's result~\cite{E} that $J(f) = \partial I(f)$, we apply Theorem~\ref{kfccomp} with $E = I(f)$ to give the following.

\begin{corollary}\label{IK}
Let $f$ be a {\tef}. If $I(f)$ is connected, then $\kfc$ is connected.
\end{corollary}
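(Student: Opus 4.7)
The plan is to apply Theorem~\ref{kfccomp} directly with the choice $E=I(f)$. First I would verify that this $E$ satisfies the two hypotheses of that theorem. The inclusion $I(f)\subset \kfc$ is immediate, since any orbit tending to~$\infty$ is unbounded. The second hypothesis, $J(f)\subset \overline{I(f)}$, is recorded in the list of Eremenko's properties just before Theorem~\ref{Qprops}, where it is stated that $J(f)=\partial I(f)$; in particular $J(f)\subset \overline{I(f)}$.

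With the hypotheses verified, Theorem~\ref{kfccomp} yields the dichotomy that either $\kfc$ is connected, or $\kfc$ has infinitely many components meeting $I(f)$. Under the standing assumption that $I(f)$ is connected, the connected set $I(f)$ must be contained in a single component of $\kfc$, so only one component of $\kfc$ can meet $I(f)$. This rules out the second alternative, and we conclude that $\kfc$ is connected.

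There is essentially no obstacle: the work has already been carried out in Theorem~\ref{kfccomp}, and the corollary is obtained by a clean specialisation together with the basic topological fact that a connected subset of a space lies inside a single component. The only small point requiring care is invoking the correct Eremenko property $J(f)\subset \overline{I(f)}$ to license the application of Theorem~\ref{kfccomp}.
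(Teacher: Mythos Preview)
Your proposal is correct and follows exactly the paper's approach: the paper simply notes Eremenko's result $J(f)=\partial I(f)$ and applies Theorem~\ref{kfccomp} with $E=I(f)$, and you have spelled out the remaining easy step (that a connected $I(f)$ can meet only one component of $\kfc$, ruling out the ``infinitely many components'' alternative).
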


Next, we give  conditions for $\kfc$ and $ \kfc \cap J(f) $ to be spiders' webs.   We say that a connected set $ E $ is a \emph{spider's web} if there exists a sequence $ (G_n)_{n \in \N} $ of bounded, \sconn\ domains such that
\begin{equation*}
\label{web}
G_n \subset G_{n+1}  \text{ and } \partial G_n \subset E, \text{ for each } n \in \N, \; \text{ and } \bigcup_{n \in \N} G_n = \C.
\end{equation*}
Clearly, any connected set that contains a \sw\ is itself a \sw,  so it follows from Corollary~\ref{IK} that if $ I(f) $ is a \sw, then $ \kfc $ is a \sw.  In fact, we prove the following more general result.

\newpage

\begin{corollary}\label{SW}
Let $f$ be a {\tef}.
\begin{enumerate}[(a)]
\item If $\kfc$ contains a \sw, then $\kfc$ is a \sw.
\item If $ \kfc \cap J(f) $ contains a \sw, then $\kfc \cap J(f) $ is a \sw.
\end{enumerate}
\end{corollary}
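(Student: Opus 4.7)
The plan is to reduce both parts to a connectedness statement, invoking the fact noted just before the corollary that any connected set which contains a \sw\ is itself a \sw. Thus for~(a) it suffices to show that $\kfc$ is connected, and for~(b) that $\kfc\cap J(f)$ is connected; in either case, the bounded \sconn\ domains $(G_n)$ accompanying the given \sw\ $S$ then automatically witness the \sw\ structure of the ambient set, since $\partial G_n\subset S$ lies inside it.

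For~(a), I would argue by contradiction. Suppose $\kfc$ is disconnected. By Lemma~\ref{rempe}, there is a closed connected set $E\subset K(f)$ such that at least two components of $\C\setminus E$ meet $\kfc$. Because $\partial G_n\subset S\subset \kfc$, we have $\partial G_n\cap E=\emptyset$ for every $n$. If $E$ is unbounded, the connected set $E$, being disjoint from $\partial G_n$, lies in one of the two disjoint open sets $G_n$ and $\C\setminus\overline{G_n}$ whose union is $\C\setminus \partial G_n$; unboundedness of $E$ together with boundedness of $G_n$ rules out $E\subset G_n$, so $E\subset \C\setminus \overline{G_n}$ for every $n$, and $\bigcup_n G_n=\C$ then forces $E=\emptyset$, a contradiction. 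If $E$ is bounded, then at least one bounded component $A$ of $\C\setminus E$ meets $\kfc$ (there is only one unbounded complementary component); $A$ is \sconn\ as a bounded complementary component of a continuum, so Theorem~\ref{bounddomkfc}(a) gives $\partial A\cap \kfc\neq\emptyset$, contradicting $\partial A\subset E\subset K(f)$.

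For~(b), the same dichotomy (from Lemma~\ref{rempe} applied to $\kfc\cap J(f)$) produces a closed connected set $E$ disjoint from $\kfc\cap J(f)$, and again $\partial G_n\cap E=\emptyset$, since $\partial G_n\subset S\subset \kfc\cap J(f)$. The unbounded case is handled identically, with $E$ replaced by its intersection with the new forbidden set playing no role in the argument. For the bounded case one needs the $\kfc\cap J(f)$-analogue of Theorem~\ref{bounddomkfc}(a): if $G$ is a bounded \sconn\ domain meeting $\kfc\cap J(f)$, then $\partial G\cap (\kfc\cap J(f))\neq\emptyset$. I expect this to follow by combining Theorem~\ref{bounddomkfc}(a), Lemma~\ref{bounddomif}, the identity $J(f)=\overline{\kfc\cap J(f)}$ from Theorem~\ref{Qprops}, and the blowing up property (Lemma~\ref{blowup}) to produce a boundary point lying in both $\kfc$ and $J(f)$.

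The main obstacle is this bounded case in~(b). Theorem~\ref{bounddomkfc}(a) alone only yields a boundary point in $\kfc$, which a priori could be a Fatou point, whereas the contradiction we want requires a boundary point lying in $\kfc\cap J(f)$. The extra structure to exploit is that the \sw\ hypothesis in~(b) places each $\partial G_n$ inside $J(f)$, so the nested curves filling $\C$ all lie in the Julia set; together with the blowing up property, this should allow one to upgrade a $\kfc$-boundary point to a $\kfc\cap J(f)$-boundary point, and making this precise is the delicate step.
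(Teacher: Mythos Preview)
Your argument for~(a) is correct and genuinely different from the paper's. The paper works via the fast escaping set: since every component of $A(f)$ is unbounded, each meets the given \sw\ and hence lies in the single component $C_1$ of $\kfc$ containing it; Theorem~\ref{kfccomp} with $E=A(f)$ then forces $\kfc$ to be connected. Your route through Lemma~\ref{rempe} and Theorem~\ref{bounddomkfc}(a) is more elementary, avoiding both $A(f)$ and Theorem~\ref{kfccomp}; on the other hand, the paper's method is what makes~(b) go through uniformly.

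For~(b), your bounded case is a genuine gap and the ingredients you list do not close it. The separating set $E$ produced by Lemma~\ref{rempe} need only be disjoint from $\kfc\cap J(f)$, so it may contain points of $\kfc\cap F(f)$; thus obtaining a point of $\partial A$ in $I(f)$ or in $\kfc$ (which is all that Lemma~\ref{bounddomif} or Theorem~\ref{bounddomkfc}(a) delivers) yields no contradiction, since that point could lie in $F(f)$ and hence in~$E$. Neither the blowing up property nor the identity $J(f)=\overline{\kfc\cap J(f)}$ upgrades such a point to one lying in $J(f)$. The paper instead observes that a \sw\ in $J(f)$ rules out \mconn\ Fatou components, so every component of $A(f)\cap J(f)$ is unbounded and therefore meets the \sw; this places $A(f)\cap J(f)$ inside a single component of $\kfc\cap J(f)$, and Lemma~\ref{EF} (the engine behind Theorem~\ref{kfccomp}) with $F=\kfc\cap J(f)$ and $E=A(f)\cap J(f)$ then gives connectedness. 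That same unboundedness of the components of $A(f)\cap J(f)$ would also repair your approach---it yields $\partial A\cap A(f)\cap J(f)\ne\emptyset$ whenever $A\cap J(f)\ne\emptyset$---but that is exactly the missing idea in your sketch.
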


We prove Corollary \ref{SW} by using various properties of the subset of $I(f)$ known as the fast escaping set $A(f)$, which was introduced in~\cite{BH99}. In particular, we use the facts that $J(f) = \partial A(f)$, that all components of $A(f)$ are unbounded, and that all components of $ A(f) \cap J(f) $ are unbounded whenever $ f $ has no \mconn\ Fatou components.  For proofs of these properties we refer to~\cite{RS10a}, for example.

\begin{proof}[Proof of Corollary \ref{SW}]
To prove part (a),  suppose that $\kfc$ contains a \sw\ and let $C_1$ be the component of $\kfc$ containing the \sw. Then, since all the components of $A(f)$ are unbounded, we have $A(f) \subset C_1$. Also, since $J(f) = \partial A(f)$, we can apply Theorem~\ref{kfccomp} with $E = A(f)$. It follows that $\kfc$ is connected and hence   that   $\kfc$ is a \sw.

 The proof of part (b) is similar.  If $\kfc \cap J(f) $ contains a \sw, then $ f $ has no \mconn\ Fatou components by \cite[Theorem 3.1]{iB84}.  Hence every component of $ A(f) \cap J(f) $ is unbounded, so if $C_2$ is the component of $\kfc \cap J(f)$ that contains the \sw, we have $A(f) \cap J(f) \subset C_2$.  The result then follows by applying Theorem~\ref{kfccomp} with $E = A(f) \cap J(f)$.
\end{proof}

Finally in this section, we prove Theorem \ref{uncount}.  In fact, we prove the following slightly stronger result.

\begin{theorem}
\label{uncount2}
Let $ f $ be a \tef\ such that $ \kfc $ is disconnected.  Then:
\begin{enumerate}[(a)]
\item $ \kfc $ has infinitely many unbounded components;
\item every \nhd\ of a point in $ J(f) $ meets uncountably many components of $ \kfc. $
\end{enumerate}
\end{theorem}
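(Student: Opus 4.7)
The plan is to handle (a) and (b) separately: part (a) will follow directly from Theorem~\ref{kfccomp} applied to the fast escaping set, while part (b) requires a Cantor-type tree construction built from Lemma~\ref{blowup}.

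For (a), I would apply Theorem~\ref{kfccomp} with $E=A(f)$, the fast escaping set. The inclusions $A(f)\subset I(f)\subset\kfc$ and $J(f)=\partial A(f)\subset\overline{A(f)}$ (both noted in Section~\ref{components}) verify the hypotheses. Since $\kfc$ is disconnected, Theorem~\ref{kfccomp} produces infinitely many components of $\kfc$ meeting $A(f)$. Each such component of $\kfc$ contains an entire component of $A(f)$, and every component of $A(f)$ is unbounded, so each of these components of $\kfc$ is unbounded.

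For (b), fix $z_0\in J(f)$ and an open neighbourhood $G$ of $z_0$. Using~(a), pick two distinct components $V_1,V_2$ of $\kfc$ meeting $J(f)$ and points $p_j\in V_j\cap J(f)\setminus E(f)$ for $j=1,2$. I would build inductively a binary tree of open sets $\{B_\omega\}_{\omega\in\{1,2\}^*}$ inside $G$, together with points $q_\omega\in B_\omega\cap\kfc\cap J(f)$, so that $B_{\omega 1},B_{\omega 2}\subset B_\omega$ are disjoint with diameters shrinking to $0$. At each inductive step, Lemma~\ref{blowup} supplies an integer $N_\omega$ with $f^{N_\omega}(B_\omega)\supset\{p_1,p_2\}$; one then chooses preimages $q_{\omega j}\in B_\omega$ satisfying $f^{N_\omega}(q_{\omega j})=p_j$ (which lie in $\kfc\cap J(f)$ by complete invariance) and small disjoint neighbourhoods $B_{\omega j}\ni q_{\omega j}$ inside $B_\omega$. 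A pullback principle then shows $q_{\omega 1}$ and $q_{\omega 2}$ lie in distinct components of $\kfc$: if they lay in a common component~$C$, then $f^{N_\omega}(C)$ would be a connected subset of $\kfc$ containing both $p_1\in V_1$ and $p_2\in V_2$, forcing $V_1=V_2$.

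The main obstacle is upgrading this countably branching tree, which only reveals countably many components at all finite levels, to uncountably many distinct components. For each $\sigma\in\{1,2\}^{\N}$ the nested intersection $\{w_\sigma\}=\bigcap_k\overline{B_{\sigma|_k}}$ is a single point in $\overline{G}\cap J(f)$, and the map $\sigma\mapsto w_\sigma$ is injective by the shrinking diameters. I would refine the choices of the integers $N_\omega$ (making them grow sufficiently rapidly along each branch, in the spirit of Eremenko-style itinerary constructions) so that each $w_\sigma\in\kfc$, by arranging the orbits of $w_\sigma$ to follow the prescribed itinerary and escape to infinity. Distinctness of the components $C_\sigma$ containing $w_\sigma$ would then follow from the pullback principle applied at the first level $k$ of disagreement between $\sigma$ and $\tau$: the component containing $w_\sigma$ must map under a suitable iterate into the neighbourhood of $p_{\sigma_k}\in V_{\sigma_k}$ while the component containing $w_\tau$ must map into the neighbourhood of $p_{\tau_k}\in V_{\tau_k}\neq V_{\sigma_k}$. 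Since $\{1,2\}^{\N}$ is uncountable, this produces uncountably many components of $\kfc$ meeting~$G$.
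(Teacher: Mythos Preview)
Your proof of~(a) is correct and matches the paper exactly.

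For~(b), your overall architecture---binary coding via the blowing-up property---is right and is also what the paper uses, but the passage to the limit has two genuine gaps that your sketch does not close.

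\emph{Separation.} At finite levels your pullback principle is valid because $f^{N_\omega}(q_{\omega j})=p_j$ exactly. For the limit point $w_\sigma$, however, you only know that some iterate lies \emph{near} $p_{\sigma_k}$, and ``near $p_j$ and in $\kfc$'' does not imply ``in $V_j$''; indeed, by the very statement you are proving, every neighbourhood of $p_j\in J(f)$ meets uncountably many components of $\kfc$. The paper sidesteps this by not using components $V_1,V_2$ as targets at all. Instead it applies Lemma~\ref{rempe} to get a closed connected set $\Gamma\subset K(f)$ with open complementary components $G_0,G_1$ each meeting $\kfc$, and places the coding targets $\overline{H_0}\subset G_0$, $\overline{H_1}\subset G_1$. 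Then if two points lie in a common component $C$ of $\kfc$ and their images under the same iterate fall in $H_0$ and $H_1$ respectively, the connected set $f^N(C)\subset\kfc$ would have to cross $\Gamma\subset K(f)$---a contradiction that works for \emph{any} point of $H_i$, not just an exact preimage of $p_i$.

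\emph{Membership in $\kfc$.} Your proposed fix---let the $N_\omega$ grow rapidly---does not by itself force unbounded orbits: the only itinerary information you record is repeated returns near the fixed targets $p_1,p_2$, and nothing prevents the orbit of $w_\sigma$ from remaining bounded. The paper resolves this by introducing an additional family of targets $H_n$, $n\ge 2$, with $\inf_{z\in H_n}|z|\to\infty$, and \emph{interleaving} escape steps with coding steps: the itinerary for the sequence $s=s_1s_2\ldots$ is
\[
\overline V,\ \overline H_{s_1},\ \overline H_{2},\ \overline H_{s_2},\ \overline H_{3},\ \overline H_{s_3},\ \overline H_{4},\ldots
\]
A single application of Lemma~\ref{RSlemm} then yields $\zeta_s\in\overline V$ whose orbit visits this sequence; the visits to $H_{n}$ force $\zeta_s\in\kfc$, and the visits to $H_{s_n}\subset G_{s_n}$ make the separation argument above go through directly. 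This also replaces your inductive tree of neighbourhoods by one clean invocation of the covering lemma.
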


The proof of Theorem \ref{uncount2}(b) is closely related to other proofs of this type (for example, \cite[Theorem 1.3]{O12a}), but we give it in full for the convenience of the reader.

\begin{proof}[Proof of Theorem \ref{uncount2}]
For part~(a), we again apply Theorem~\ref{kfccomp} with $E = A(f)$ and conclude that, if $\kfc$ is disconnected, then there are infinitely many components of $\kfc$ that meet~$A(f)$. Since, as noted earlier, all components of $A(f)$ are unbounded, the result follows.

To prove part~(b), note first that, since $ \kfc $ is disconnected, it follows from Lemma~\ref{rempe} that there exists a closed connected set $ \Gamma \subset K(f) $ with two complementary components, say $ G_0 $ and $ G_1 $, each containing points in $ \kfc $.

Now since the boundaries of $ G_0 $ and $ G_1 $ are connected and lie in $ K(f), $ both $ G_0 $ and $ G_1 $ must meet $ J(f). $  For suppose that $ G_0 $, say, lies in $ F(f) $.  Then since $ \partial G_0 \subset K(f),$ it follows that $ G_0 $ is a Fatou component that is also a component of $ \kfc. $  However, since $ \partial G_0 $ is connected, Lemma \ref{kfcjf} shows that this is impossible. So, for $i=0,1$, there exist $z_i \in J(f)$ and a bounded open neighbourhood $H_i$ of~$z_i$ such that $\overline{H_i} \subset G_i \setminus E(f)$.

Also, since $J(f)$ is unbounded, for each $ n \geq 2 $ there is a point $ z_n \in J(f) $ and a bounded open neighbourhood $H_n$ of $ z_n, $ with the properties that $H_n \cap E(f) = \emptyset$ and $ \inf_{z \in H_n} |z| \to \infty $ as $ n \to \infty $.

Now let $ z $ be an arbitrary point in $ J(f) $ and let $ V $ be a bounded open \nhd\ of $ z $.  Then, by Lemma \ref{blowup}, there exists $ k \in \N $ such that
\begin{equation}
\label{blowup1}
f^k(V) \supset \overline{H}_0 \cup \overline{H}_1,
\end{equation}
and, for any $ n \geq 2 $, there exists $ m_n \in \N $ such that
\begin{equation}
\label{blowup2}
f^{m_n}(H_0) \supset \overline{H}_n, \; f^{m_n}(H_1) \supset \overline{H}_n \mbox{ and } f^{m_n}(H_n) \supset \overline{H}_0 \cup \overline{H}_1.
\end{equation}

Now let $ s = s_1s_2s_3 \ldots $ be an infinite sequence of $ 0 $s and $ 1 $s.  We will show that there is an uncountable set of such sequences that encode the orbits of points that lie in distinct components of $ \kfc $ that meet $ \overline{V}$.

To show this, put $ E_0 = \overline{V} $ and, for $ n \in \N, $ set $$ E_{2n} = \overline{H}_{n+1} \textrm{  and  } E_{2n-1} = \overline{H}_{s_n}. $$

Then, for each sequence $ s = s_1s_2s_3 \ldots $, it follows from (\ref{blowup1}), (\ref{blowup2}) and Lemma~\ref{RSlemm} that there is a  corresponding sequence $ (p_n)_{n \in \N} $ and a point $ \zeta_s \in \overline{V} $ such that $ f^{p_n}(\zeta_s) \in E_n $ for $ n \in \N.$ Furthermore, all such points must lie in $ \kfc. $

We now claim that points in $ \overline{V} \cap \kfc $ whose orbits are encoded by different infinite sequences of $ 0 $s and $ 1 $s must lie in different components of $ \kfc $.  For if two such sequences differ, then some iterate of $ f $ will map one point to $ G_0 $ and the other to $ G_1 $.  Thus, if the two points are in the same component $ C $ of $ \kfc $, then some iterate of $ C $ meets $ \Gamma \subset K(f), $ which is a contradiction.

Evidently, the set of all sequences $ s = s_1s_2s_3 \ldots $ of $ 0 $s and $ 1 $s can be put in one-to-one correspondence with the binary representations of points in the unit interval. We have therefore shown that every \nhd\ of an arbitrary point in $ J(f) $ meets uncountably many components of $ \kfc $, and this proves part~(b).
\end{proof}

\begin{remark*}
It follows by a similar argument to the proof of Theorem \ref{uncount2}(b) that, for a \tef\ $ f $, every \nhd\ of a point in $ J(f) $ meets uncountably many components of $ \kfc \cap J(f). $  The proof uses Lemma~\ref{RSlemm} with the compact sets $E_n$, $n\ge 0$, in the above proof replaced by $E_n\cap J(f)$, $n\ge 0$.
\end{remark*}

\section{Examples}
\label{examples}
\setcounter{equation}{0}

In this section, we give details of the two examples referred to in the remarks after the statement of Theorem \ref{fullresult}.

First, we give an example of a \tef\ that satisfies the conditions of Theorem \ref{fullresult} but not those of Theorem \ref{minconn}.

\begin{example}
\label{genex}
Let $ f $ be the \tef\ defined by
\[ f(z) = - 10 z e^{-z} - \tfrac{1}{2} z. \]
Then $ f $ satisfies the conditions of Theorem \ref{fullresult}, so $ \kfc $ is connected, but there is no $ r>0 $ such that $ m^n(r) \to \infty $ as $ n \to \infty. $
\end{example}

\begin{proof}  Since
\[
m(r)\le |f(r)|=\tfrac12 r(1+o(1))\;\text{ as } r\to \infty,
\]
it is clear that there is no $ r>0 $ such that $ m^n(r) \to \infty $ as $ n \to \infty. $

We show, nevertheless, that $ \kfc $ is connected because $ f $ satisfies the conditions of Theorem \ref{fullresult}.

Let $ (D_n)_{n \in \N} $  be the sequence of nested domains defined by
\[
\begin{split}
D_n = & \left\{ z \in \C: 0 < \textrm{Re } z < 4n \pi, \, \, \left| \textrm{Im } z \right| < 4n \pi \right\} \\
& \quad \cup \, \left\{ z \in \C: - n \pi < \textrm{Re } z \leq 0, \, \, \left| \textrm{Im } z \right| < n \pi \right\}.
\end{split}
\]
(Here, for convenience, we have labelled the domains with subscripts in $ \N $ rather than in $ \N_0 $.)  Each domain $ D_n $ is the union of two rectangles, the larger in the right half-plane and the smaller in the left half-plane (see Figure \ref{FigEx}).  It is clear that condition~(b) in the statement of Theorem \ref{fullresult} is satisfied by this sequence of domains.

We now show that condition~(a) in the statement of Theorem \ref{fullresult} is also satisfied for $ n > 1 $.  To see this, consider Figure \ref{FigEx}, in which sections of the boundary of~$ D_n $ are labelled with lower case letters and their images under $ f $  with the corresponding upper case letters.  The following brief notes discuss the images of different sections of the boundary of $ D_n $ using the same labelling as in the figure.

\underline{Section a} \, On this section $ z = 4n\pi + iy $, where $ - 4n\pi \leq y \leq 4n\pi, $ so
\[
|f(z)-(-\tfrac12 z)|=|f(z)-(-2n\pi-\tfrac12 i y)|\le 40\pi\sqrt 2 \exp(-4\pi)<10^{-3},
\]
and hence $f(z)$ evidently lies outside $ D_{n+1} $ for $ n > 1. $

\underline{Section b} \, On this section $ z = x + 4n\pi i $, where $ 0 \leq x \leq 4n\pi $, so
\[
f(z)=-\tfrac12 x-10xe^{-x} +i\left(-2n\pi - 40n\pi e^{-x}\right),
\]
lies in the left half-plane, below the line $\textrm{Im } z= -2n\pi$, and hence outside $ D_{n+1} $. The image of this section meets the imaginary axis at $ f(4n\pi i) = -42 n \pi i $  (off the scale in Figure \ref{FigEx}).

\begin{figure}[h]
     \centering
     \setlength\fboxsep{10pt}
     \setlength\fboxrule{0.5pt}
     \def\svgwidth{300pt}
     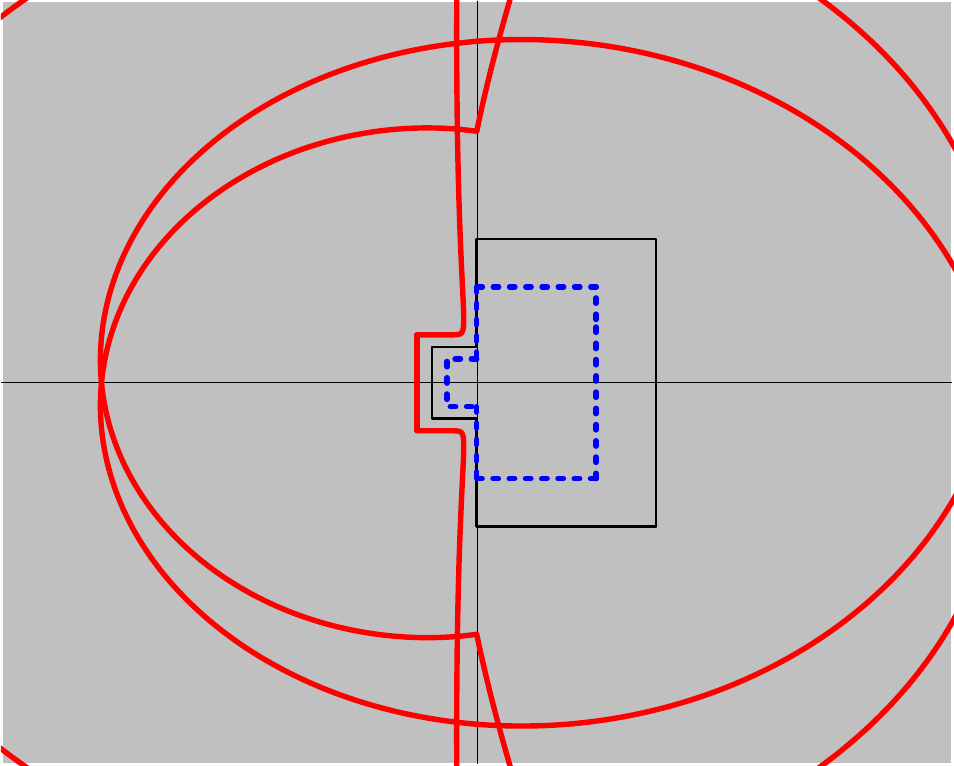
     \vspace{.2cm}
     \caption{The boundary of the domain $ D_n $ and its image under the function $ f: z \mapsto - 10 z e^{-z} - \tfrac{1}{2} z $ (see Example \ref{genex}), illustrated for $ n = 2 $.  Sections of the boundary of $ D_n $ are labelled with lower case letters and their images with the corresponding upper case letters.  Only the immediate vicinity of $ D_n $ is shown.}
      \label{FigEx}
\end{figure}

\underline{Section c} \, Here $ z = iy $, where $ n \pi \leq y \leq 4n\pi. $  The term $ -10ze^{-z} $ is dominant, and since this section has length $ 3n \pi $, the image of the section winds around the origin, the factor of $ 10 $ ensuring that it stays outside $ D_{n+1}. $  The image is thus a spiralling curve joining $ f(4n\pi i)= -42\pi i $ to  $ f(n\pi i) = (19/2) n \pi i $.

\underline{Section d} \, This section of the boundary is the union of the three line segments
\[
\begin{split}
z = & \, x + n \pi i, \quad - n\pi \leq x \leq 0, \\
z = & - n \pi +  iy, \quad - n\pi \leq y \leq n \pi, \\
z = & \, x - n \pi i, \quad - n\pi \leq x \leq 0. \\
\end{split}
\]
On all three segments the modulus of $ 10ze^{-z} $ is at least $10n\pi$ and exceeds the modulus of $\tfrac12 z$ by a factor of at least 20, so the images of these segments lie well outside $D_{n+1}$. Most of these images lies off the scale in Figure \ref{FigEx}.

\underline{Sections e and f} \,  The images of these sections of the boundary are the reflections in the real axis of the images of sections c and b, respectively.

We have now shown that the whole of $ f (\partial D_n) $ lies outside $ D_{n+1} $, so the conditions of Theorem \ref{fullresult} are satisfied and  hence  $ \kfc $ is connected.
\end{proof}

Next, we show that not all \tef s that are \spl\ in the sense defined in \cite{O12a} meet the conditions of Theorem \ref{fullresult}. The authors are grateful to Dave Sixsmith for suggesting this example.

\begin{example}
\label{counter}
Let $ f $ be the \tef\ defined by
\[ f(z) = \cos z + z. \]
Then $ f $ is \spl\ and $ \kfc $ is disconnected.
\end{example}

\begin{proof}
By \cite[Theorem 1.6]{O12a}, a \tef\ $ f $ is strongly poly\---\ nomial-like if and only if there exists a sequence of bounded, \sconn\ domains $ (D_n)_{n \in \N_0} $ such that
\begin{enumerate}[(i)]
\item $ f(\partial D_n) $ surrounds $ \overline{D}_n $, for $ n \in \N_0, $
\item $ \bigcup_{n \in \N_0} D_n = \C $, and
\item $ \overline{D}_n \subset D_{n+1} $, for $ n \in \N_0 $.
\end{enumerate}
Let $ (D_n)_{n \in \N_0} $  be the sequence of nested open rectangles defined by
\[D_n = \left\{ z \in \C: -\left( 2n+\dfrac{11}{4} \right) \pi < \textrm{Re } z < \left( 2n+\dfrac{9}{4} \right) \pi, \, \, \left| \textrm{Im } z \right| < 2(n+1) \pi \right\}.\]
Then properties (ii) and (iii) are evidently satisfied.

To show that property (i) is also satisfied, consider first the images under $ f $ of the  two  sides of the rectangle $ D_n $ parallel to the real axis.  Writing $$ f(z) = \tfrac{1}{2}(e^{iz} + e^{-iz}) + z, $$ we see that $ f $ maps both  these  sides into the annulus
\[
\{z:\tfrac12 e^{2(n+1)\pi}-4(n+1)\pi<|z|< \tfrac12 e^{2(n+1)\pi}+4(n+1)\pi\},
\]
which clearly lies outside $ \overline{D}_n $ for all $ n \in \N_0. $

Next, if $ z $ lies on the vertical side of $ D_n $ in the right half-plane, we have
\[ \textrm{Re } f(z) = \textrm{Re } z + \dfrac{1}{2 \sqrt{2}}(e^y + e^{-y}), \]
where $ y = \textrm{Im } z$.  Similarly, for points on the vertical side of $ D_n $ in the left half-plane,
\[ \textrm{Re } f(z) = \textrm{Re } z - \dfrac{1}{2 \sqrt{2}}(e^y + e^{-y}).\]
It follows that $ f(\partial D_n) $ surrounds $ \overline{D}_n $, for $ n \in \N_0, $ so $ f $ is \spl.

 Now, $ f $  has fixed points at $ z = (k + \tfrac{1}{2})\pi,\, k \in \Z $.  These fixed points are repelling if $ k $ is odd and superattracting if $ k $ is even, and all points on the real axis except for the repelling fixed points tend under iteration towards the nearest superattracting fixed point.  It follows that the real axis lies in $ K(f) $, and indeed that the real axis is a closed, connected set in $ K(f) $ that disconnects $ \kfc $.
\end{proof}

\begin{remark*}
It can be shown that $ K(f) $ is connected for the function $ f $ in Example~\ref{counter}.  We omit the details.
\end{remark*}

\end{document}